\newcommand\cc{\mathfrak c}
\renewcommand\gg{\mathfrak g}
\newcommand\kk{\mathfrak k}
\newcommand\mm{\mathfrak m}
\newcommand{\inprod}[1]{\langle #1\rangle}
\newcommand\inverse{{^{-1}}}
\renewcommand{\check}{^{\vee}}
\newcommand\ra{\rightarrow}
\newcommand{\tuple}[1]{{\mathbf {#1}}}
\DeclareMathOperator{\Ad}{Ad}
\DeclareMathOperator{\rk}{rk}
\DeclareMathOperator{\GL}{GL}
\DeclareMathOperator{\SL}{SL}
\DeclareMathOperator{\PGL}{PGL}
\DeclareMathOperator{\Lie}{Lie}
\DeclareMathOperator{\sat}{sat}
\DeclareMathOperator{\End}{End}
\DeclareMathOperator{\Mat}{Mat}
\DeclareMathOperator{\im}{im}
\numberwithin{equation}{section}
\newtheorem{thm}[equation]{Theorem}
\newtheorem{lem}[equation]{Lemma}
\newtheorem{cor}[equation]{Corollary}
\newtheorem{prop}[equation]{Proposition}
\theoremstyle{definition}
\newtheorem{defn}[equation]{Definition}
\newtheorem{exmp}[equation]{Example}
\theoremstyle{remark}
\newtheorem{rem}[equation]{Remark}
\theoremstyle{remark}
\subjclass[2010]{20G15 (14L24)}
\keywords{$G$-complete reducibility, semisimple modules, reductive pairs}
\title[$G$-complete reducibility and semisimple modules]
{$G$-complete reducibility and semisimple modules}
\author[M.\  Bate]{Michael Bate}
\address
{Department of Mathematics,
University of York,
York YO10 5DD,
United Kingdom}
\email{michael.bate@york.ac.uk}
\author[S. Herpel]{Sebastian Herpel}
\address
{Fakult\"at f\"ur Mathematik,
Ruhr-Universit\"at Bochum,
D-44780 Bochum, Germany}
\email{sebastian.herpel@rub.de}
\author[B.\ Martin]{Benjamin Martin}
\address
{Mathematics and Statistics Department,
University of Canterbury,
Private Bag 4800,
Christchurch 8140,
New Zealand}
\email{B.Martin@math.canterbury.ac.nz}
\author[G. R\"ohrle]{Gerhard R\"ohrle}
\address
{Fakult\"at f\"ur Mathematik,
Ruhr-Universit\"at Bochum,
D-44780 Bochum, Germany}
\email{gerhard.roehrle@rub.de}
\begin{document}

\begin{abstract}
Let $G$ be a connected reductive algebraic group defined 
over an algebraically closed field 
of characteristic $p > 0$. 
Our first aim in this note is 
to give concise and uniform proofs for two fundamental and deep results 
in the context of Serre's notion of $G$-complete reducibility, 
at the cost of less favourable bounds.
Here are some special cases of these results:
Suppose that the index $(H:H^\circ)$ is prime to $p$ and
that $p > 2\dim V-2$ for some faithful $G$-module $V$.
Then the following hold:
(i) $V$ is a semisimple $H$-module if and only if $H$ is 
$G$-completely reducible;
(ii) $H^\circ$ is reductive if and only if $H$ is $G$-completely reducible.

We also discuss two new related results:
(i)  if $p \ge \dim V$ for some $G$-module $V$ and
$H$ is a $G$-completely reducible subgroup of $G$, 
then $V$ is a semisimple $H$-module
-- this generalizes  Jantzen's  
semisimplicity theorem (which is the case $H = G$);
(ii)  if $H$ acts 
semisimply on $V \otimes V^*$ for some faithful $G$-module $V$, 
then $H$ is $G$-completely reducible. 
\end{abstract}

\maketitle


\section{Introduction}
\label{sec:intro}

Throughout, $G$ is a connected reductive linear algebraic group
defined over an algebraically closed field 
of characteristic $p > 0$ and 
$H$ is a closed subgroup of $G$.
Following  Serre \cite{serre2}, we say that 
$H$ is \emph{$G$-completely reducible} ($G$-cr for short) 
provided that whenever $H$ is contained in a parabolic subgroup $P$ of $G$,
it is contained in a Levi subgroup of $P$; 
for an overview of this concept see for instance \cite{serre1} and 
\cite{serre2}.
Note that in case $G = \GL(V)$ a subgroup $H$ is $G$-cr exactly when 
$V$ is a semisimple $H$-module.
Recall that if $H$ is $G$-cr, 
then the identity component $H^\circ$ of $H$ is reductive,  \cite[Property 4]{serre1}.

Let $V$ denote a rational $G$-module
and let $\rho : G \to \GL(V)$ be the representation of $G$
afforded by $V$.
Following Serre \cite{serre1}, 
we call $V$ \emph{non-degenerate} provided 
$({\rm ker} \rho)^\circ$ is a torus. 

First we consider two important and deep theorems in this context, 
\cite[Thm.\ 5.4]{serre2} and \cite[Thm.\ 4.4]{serre2}, 
which provide necessary and sufficient conditions for a subgroup 
$H$ of $G$ to be $G$-cr provided $p$ is sufficiently large.

\begin{thm}{\cite[Thm.\ 5.4]{serre2}}
\label{thm:nv}
Suppose that $p > n(V)$ for some rational $G$-module $V$.
\begin{itemize}
\item[(i)] 
If $H$ is $G$-completely reducible, then $V$ is a semisimple $H$-module. 
\item[(ii)]
Suppose that $V$ is non-degenerate.
If $V$ is semisimple as an $H$-module, then 
$H$ is $G$-completely reducible. 
\end{itemize}
\end{thm}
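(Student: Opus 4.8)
The plan is to prove both parts by comparing complete reducibility relative to $G$ with complete reducibility relative to $\GL(V)$, the main tool for this comparison being the theory of reductive pairs. Write $\rho\colon G\to\GL(V)$ for the representation afforded by $V$ and set $\ovl G:=\rho(G)$, a reductive subgroup of $\GL(V)$. The crucial preliminary task is to convert the single inequality $p>n(V)$ into the following structural statements, which I would then use as black boxes: (a) $V$ is a semisimple $G$-module -- equivalently, no two composition factors of $V$ sit in a non-split extension, which one reads off the linkage principle together with the large size forced on a pair of irreducibles admitting a non-trivial extension (this is, after all, part (i) in the case $H=G$, so I would prove it first and directly); and (b) $(\GL(V),\ovl G)$ is a reductive pair, that is, $\Lie(\ovl G)$ is an $\ovl G$-module direct summand of $\mathfrak{gl}(V)\cong V\otimes V^*$ -- which follows once the trace form of $V$ is non-degenerate on $\Lie(\ovl G)$, again guaranteed by $p$ being large relative to $\dim V$. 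It is exactly here that a uniform argument is likely to demand more of $p$ than $p>n(V)$, so this is where I expect the stated bound to deteriorate.

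Granting (a) and (b), part (i) is quick. If $H$ is $G$-completely reducible, then its image $\rho(H)$ is $\ovl G$-completely reducible, since complete reducibility is inherited by images under surjective homomorphisms of reductive groups (Serre \cite{serre1}). By (a), $\ovl G$ is a $\GL(V)$-completely reducible reductive subgroup of $\GL(V)$; hence, by the transitivity property of complete reducibility for subgroups of completely reducible reductive subgroups (Bate--Martin--R\"ohrle), the fact that $\rho(H)$ is $\ovl G$-completely reducible upgrades to the fact that $\rho(H)$ is $\GL(V)$-completely reducible. The latter is precisely the statement that $V$ is a semisimple $H$-module.

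For part (ii) I would run the comparison in the opposite direction. As $V$ is non-degenerate, $N:=(\ker\rho)^\circ$ is a central torus; complete reducibility of $H$ is unaffected on replacing $H$ by $HN$ and passing to the quotient $G/N$, and it is transported across the isogeny $G/N\to\ovl G$ -- this last passage being the reason non-degeneracy is assumed, and the place where one must handle inseparable isogenies, which is harmless for $p$ large. So we may assume $\rho$ is injective, so that $G\le\GL(V)$ and, by (b), $(\GL(V),G)$ is a reductive pair. If $V$ is now a semisimple $H$-module, then $H$ is a $\GL(V)$-completely reducible subgroup, and moreover $C_{\GL(V)}(H)$ is separable, being the unit group of the semisimple algebra $\End_H(V)$. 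Thus the hypotheses of the Bate--Martin--R\"ohrle transfer theorem for reductive pairs hold, and that theorem delivers that $H$ is $G$-completely reducible.

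In short, the argument decomposes into a soft part -- the two short transfer steps across $\ovl G\le\GL(V)$ -- and a hard part, namely the preliminary step extracting (a), (b), and the auxiliary separability and isogeny facts from $p>n(V)$. That preliminary step is where all the representation-theoretic content sits (semisimplicity of $V$ and of $V\otimes V^*$, non-degeneracy of trace forms, bounds on extensions from the linkage principle), and it is where I would expect to lose the sharp constant.
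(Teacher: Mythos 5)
The paper does not actually prove Theorem~\ref{thm:nv} -- it quotes it from Serre \cite[Thm.\ 5.4]{serre2} -- and instead proves a variant, Theorem~\ref{thm:ss-vs-cr}, with the worse bound $p > 2\dim V - 2$; so the right comparison is with that variant. Your part~(ii) is essentially the paper's proof of Theorem~\ref{thm:ss-vs-cr}(ii): establish that $(\GL(V),\rho(G))$ is a reductive pair, apply \cite[Cor.\ 3.36]{BMR} (Proposition~\ref{prop:red-pair-gl-cr}) to deduce $\rho(H)$ is $\rho(G)$-cr, then use \cite[Lem.\ 2.12(ii)(b)]{BMR} to descend through the non-degenerate $\rho$. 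The only difference is how the reductive pair is obtained: you propose non-degeneracy of the trace form on $\Lie\rho(G)$, whereas the paper proves semisimplicity of $V\otimes V^*\cong\Lie\GL(V)$ directly via Serre's tensor product theorem (Theorem~\ref{thm:serre}) and then takes a complement. The paper's route is more robust -- trace forms can vanish identically on simple ideals of $\Lie\rho(G)$ even for fairly large $p$ -- but either way one is led to the $p>2\dim V-2$ threshold, just as you predicted.

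Part~(i) is where your proposal diverges and where I see a genuine gap. You prove the case $H=G$ and then upgrade to arbitrary $G$-cr $H$ via a ``transitivity'' principle: $\rho(H)$ is $\rho(G)$-cr and $\rho(G)$ is $\GL(V)$-cr, hence $\rho(H)$ is $\GL(V)$-cr. That implication -- $H\le M\le G$ with $M$ reductive and $G$-cr, and $H$ $M$-cr, implies $H$ $G$-cr -- is exactly the crux, and you give no argument for it, only an attribution. Its availability at this point in the logical order is doubtful: the paper itself proves the statement that would fall out of your argument (Theorem~\ref{thm:jantzen-cr-subgps}, which is precisely ``$p\ge\dim V$ and $H$ $G$-cr $\Rightarrow V$ semisimple as $H$-module'') by a considerably more elaborate route -- reducing to simple $V$, splitting it with Steinberg's tensor product theorem, invoking the full force of Theorem~\ref{thm:nv}(i) on each restricted factor, and then reassembling with Serre's tensor product theorem \cite[Thm.\ 1]{serre0}. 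If Jantzen-on-$G$ plus your transitivity step gave a one-line proof, the authors of \cite{BMR} would certainly have used it. By contrast, the paper's own proof of its variant Theorem~\ref{thm:ss-vs-cr}(i) avoids the transitivity issue entirely by applying Jantzen's theorem directly to the reductive group $H^\circ$ and then extending to $H$ via \cite[Lem.\ 3.1]{jantzen0}, at the cost of the additional hypothesis that $(H:H^\circ)$ be prime to~$p$. You should either supply a proof of the transitivity step (being careful that it does not circularly depend on the statement you are proving) or switch to the paper's direct argument.
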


Here the invariant $n(V)$ is
defined as follows: let $T$ be a maximal torus of $G$ and let 
$\lambda$ be a $T$-weight of $V$. Define
$n(\lambda) = \sum_{\alpha>0}\inprod{\lambda,\alpha\check}$, 
where the sum is taken over all positive roots of $G$ 
with respect to $T$. Then define 
$n(V) = \sup \{n(\lambda)\}$, where the 
supremum is taken over all $T$-weights $\lambda$ of $V$,
 \cite[\S 5.2]{serre2}.
The proof of Theorem \ref{thm:nv} is elaborate and complicated;
it depends on 
the full force of the following result. 

\begin{thm}{\cite[Thm.\ 4.4]{serre2}}
\label{thm:ag}
Suppose that $p \ge a(G)$ and that 
$(H:H^\circ)$ is prime to $p$.
Then $H^\circ$ is reductive  if and only if 
$H$ is $G$-completely reducible.
\end{thm}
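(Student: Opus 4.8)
The ``only if'' direction requires nothing about $p$: if $H$ is $G$-completely reducible then $H^\circ$ is reductive by \cite[Property 4]{serre1}. So the substance is the converse, and the plan is to show directly that, assuming $H^\circ$ reductive, $(H:H^\circ)$ prime to $p$ and $p\ge a(G)$, the subgroup $H$ lies in a Levi subgroup of every parabolic subgroup $P$ of $G$ that contains it.

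First I would strip off the unipotent radical $U=R_u(P)$. Fix a Levi subgroup $L$ of $P$ and let $\pi\colon P\to L\cong P/U$ be the projection. One checks $H\cap U=1$: it is a normal unipotent subgroup of $H$, hence a finite $p$-group (its identity component lies in $R_u(H^\circ)=1$); it meets $H^\circ$ in a finite $p$-subgroup of $Z(H^\circ)$, which is trivial since the group of points of the centre of a connected reductive group has no $p$-torsion in characteristic $p$; so $H\cap U$ embeds into $H/H^\circ$ and is trivial because $(H:H^\circ)$ is prime to $p$. Thus $\pi$ restricts to an isomorphism $H\xrightarrow{\sim}H_1:=\pi(H)\le L$, and after replacing $L$ by a suitable $U$-conjugate we may assume a fixed maximal torus $S$ of $H^\circ$ lies in $L$. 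Now $H$ and $H_1$ are two complements to $U$ in $\pi^{-1}(H_1)=U\rtimes H_1$, so they are $U$-conjugate exactly when the class of $H$ in the non-abelian pointed set $H^1(H_1,U)$ is trivial. Hence it suffices to prove $H^1(H,U)=\{*\}$, where $H$ acts on $U$ through $H\xrightarrow{\sim}H_1\hookrightarrow L$ and the conjugation action of $L$ on $U$; this forces $H$ into a ($U$-conjugate of the) Levi $L$, and since $P$ is arbitrary it gives $G$-complete reducibility.

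Next I would run a d\'evissage of $U$. The grading of $\mathfrak g=\Lie G$ attached to $P$ supplies an $L$-stable (hence $H$-stable) filtration $U=U_1\trianglerighteq U_2\trianglerighteq\cdots\trianglerighteq U_{m+1}=1$ whose successive quotients $V_j:=U_j/U_{j+1}$ are vector groups, each isomorphic as an $H$-module to a graded piece of $\Lie U\subseteq\mathfrak g$. By the standard long exact sequences for non-abelian cohomology of iterated normal extensions with abelian quotients, $H^1(H,U)=\{*\}$ follows once $H^1(H,V_j)=0$ for each $j$; and by the inflation--restriction sequence, $H/H^\circ$ being linearly reductive (its order is prime to $p$), one has $H^1(H,V_j)=H^1(H^\circ,V_j)^{H/H^\circ}$, so it is enough to show $H^1(H^\circ,V_j)=0$.

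This vanishing is the heart of the proof and the point at which $p\ge a(G)$ is used in earnest. Every $S$-weight of $V_j$ occurs among the $S$-weights of $\mathfrak g$, i.e.\ among the restrictions to $S$ of the roots of $G$ and of $0$; and for the $\SL_2$ or $\PGL_2$ attached to any root of $H^\circ$, the restriction of $\mathfrak g$ is a module of dimension $\dim G$, which bounds the pairings $\inprod{\mu,\beta\check}$ ($\mu$ a weight of $V_j$, $\beta$ a root of $H^\circ$) independently of the embedding $H^\circ\hookrightarrow G$. The invariant $a(G)$ is defined so that $p\ge a(G)$ places every composition factor of every such $V_j$ in the closure of the lowest $p$-alcove for $H^\circ$; by the linkage principle $V_j$ is then a semisimple $H^\circ$-module, and each nontrivial irreducible constituent $L$ of $V_j$ satisfies $H^1(H^\circ,L)=0$ (being alone in its block), while $H^1(H^\circ,k)=0$ since $H^\circ$ is reductive. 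Hence $H^1(H^\circ,V_j)=0$, which closes the argument. I expect the genuine difficulty to lie entirely here: one must bound the weights of the $H^\circ$-module $\mathfrak g$ sharply and uniformly over all reductive subgroups $H^\circ$ of $G$ and all parabolics, pin down the exact value of $a(G)$ for which this works, and route the conclusion through the linkage principle so as to obtain $H^1(H^\circ,V_j)=0$ rather than merely the (insufficient) semisimplicity of $V_j$ --- it is this bookkeeping, not any isolated idea, that makes Serre's proof long.
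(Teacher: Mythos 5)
There is an important point of framing before the mathematics: this paper does not prove Theorem \ref{thm:ag} at all. It is quoted verbatim from \cite[Thm.\ 4.4]{serre2}, and the authors explicitly stress that its proof rests on Jantzen and McNinch \cite{mcninch2} for classical $G$ and on the long case-by-case analysis of Liebeck--Seitz \cite{liebeckseitz0} for exceptional $G$. What the paper actually proves is the variant Theorem \ref{thm:red-vs-cr}, under the much stronger hypothesis $p>2\dim V-2$ for a non-degenerate $G$-module $V$: there the forward direction is immediate from \cite[Property 4]{serre1}, and the converse follows in a few lines from Jantzen's Theorem \ref{thm:jantzen} applied to $H^\circ$ (plus \cite[Lem.\ 3.1]{jantzen0} to pass from $H^\circ$ to $H$) together with the reductive-pair argument of Theorem \ref{thm:ss-vs-cr}(ii). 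So your sketch should be measured against Serre's original proof, not against anything in this paper.

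As for the sketch itself: the reductions are fine ($H\cap R_u(P)=1$ under the hypotheses, the d\'evissage of $U=R_u(P)$ into $L$-stable vector-group layers isomorphic to graded pieces of $\Lie U$, and the passage via inflation--restriction to $H^1(H^\circ,V_j)=0$), but the entire content of the theorem sits in that last vanishing, and the mechanism you propose for it fails at the bound $a(G)=\rk(G)+1$. The pairings $\inprod{\mu,\beta\check}$, for $\mu$ a weight of $\gg|_{H^\circ}$ and $\beta$ a root of $H^\circ$, are not bounded by anything close to $\rk(G)$: already when $H^\circ$ contains a principal $\SL_2$ of $G$, the adjoint module has $\SL_2$-weights as large as $2h-2$ (cf.\ Remark \ref{rem:LieG}), which exceeds $\rk(G)+1$ in every type; and a bound by $\dim G$ is useless for a prime as small as $\rk(G)+1$. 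Hence $p\ge a(G)$ does not place the composition factors of the $V_j$ in the closure of the lowest alcove, and the linkage-principle step collapses. The proof that actually achieves the bound $a(G)$ runs differently: for classical $G$ one exploits the natural module, whose dimension is comparable to $a(G)$, applies the dimensional semisimplicity criteria of Jantzen/McNinch to $V|_H$, and transfers complete reducibility from $\GL(V)$ back to $G$; for exceptional $G$ no uniform argument is known and one falls back on the Liebeck--Seitz classification. You correctly flag that the heart of the argument is missing, but the gap is not bookkeeping: the cohomological route you outline cannot be pushed through at $p\ge\rk(G)+1$.
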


Here the invariant $a(G)$ is defined as follows:
for $G$ simple, set $a(G) = \rk(G) +1$, 
where $\rk(G)$ is the rank of $G$.
For $G$ reductive, let $a(G) = \sup(1, a(G_1), \ldots, a(G_r))$, 
where $G_1, \ldots, G_r$ are the simple components of $G$,
cf.\ \cite[\S 5.2]{serre2}.

We emphasize that Theorem \ref{thm:ag} is a consequence of a 
number of deep theorems
due to Jantzen \cite{jantzen0} (Theorem \ref{thm:jantzen}) 
and McNinch  \cite{mcninch2}
in case $G$ is classical 
and Liebeck and Seitz \cite{liebeckseitz0}
in case $G$ is of exceptional type, where
the latter involves complicated and long case-by-case analyses.
Given that the proofs of both these theorems are intricate,
it is desirable to have uniform arguments for them 
even under additional restrictions on $p$.
We present new concise and uniform proofs of these two results in
Theorems \ref{thm:ss-vs-cr} and \ref{thm:red-vs-cr}
with different bounds on $p$. 
Here we are particularly interested in obtaining 
short proofs for sufficient conditions for $G$-complete reducibility.
Unfortunately, though not unexpectedly, 
the brevity and uniformity 
do come at the expense of less favourable bounds on $p$.

In \cite{serre1}  and \cite{serre2}, Serre gave an alternative proof
of his tensor product theorem 
\cite[Thm.\ 1]{serre0} via the concept of $G$-cr subgroups. 
Theorems \ref{thm:nv} and \ref{thm:ag} are both part of this approach.
In Section \ref{sec:variations} we essentially argue the other way round:
based on a special case of 
the aforementioned 
tensor product theorem 
(Theorem \ref{thm:serre}), we first 
derive a short proof of Theorem \ref{thm:nv} (in Theorem \ref{thm:ss-vs-cr})
and in turn use part of that result 
to obtain a concise and uniform 
proof of Theorem \ref{thm:ag} (in Theorem \ref{thm:red-vs-cr}), 
with a worse bound on $p$.

\medskip

Our next result  generalizes 
Jantzen's semisimplicity Theorem \ref{thm:jantzen} to $G$-cr  subgroups
of $G$. 

\begin{thm}
\label{thm:jantzen-cr-subgps}
If  $p \ge \dim V$ and $H$ is $G$-completely reducible,
then $V$ is a semisimple $H$-module. 
\end{thm}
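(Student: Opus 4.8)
The plan is to reduce the statement about $H$-semisimplicity of $V$ to Jantzen's theorem applied inside a smaller reductive group, exploiting the hypothesis that $H$ is $G$-cr. First I would pass to $H^\circ$: since $H$ is $G$-cr with $(H:H^\circ)$ automatically... — actually one cannot assume that, so the first genuine step is to recall that $H$ being $G$-cr implies $H^\circ$ is reductive (stated in the introduction, \cite[Property 4]{serre1}), and that $H^\circ$ is again $G$-cr. Standard Clifford-theory reasoning shows that $V$ is semisimple as an $H$-module if and only if it is semisimple as an $H^\circ$-module, at least when $(H:H^\circ)$ is prime to $p$; in general one still has that $H^\circ$-semisimplicity plus complete reducibility of the finite group $H/H^\circ$ action gives what we want, but since the cleanest route is via $H^\circ$, I would first treat the connected case and then indicate the reduction. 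So assume $H = H^\circ$ is connected reductive and $G$-cr.

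Next, the key idea: because $H$ is a connected reductive $G$-cr subgroup, I want to realize $V|_H$ as a module obtained by restriction through a chain that lets me invoke Jantzen's theorem (Theorem 1.2, the case $H = G$) for the group $H$ itself. The natural statement one hopes for is: if $W$ is a rational module for a connected reductive group $H$ with $\dim W \le p$, then $W$ is semisimple. But $V$ is a $G$-module of dimension $\le p$, and its restriction $V|_H$ has the same dimension $\le p$; Jantzen's theorem applied to $H$ then says $V|_H$ is semisimple — provided Jantzen's bound is genuinely $\dim W \le p$ and not something smaller. I would check that Jantzen's semisimplicity theorem is exactly: a rational module of dimension $< p$... the excerpt writes $p \ge \dim V$, i.e. $\dim V \le p$, so I would quote Theorem 1.2 in the form: if $p \ge \dim W$ then every rational $H$-module $W$ is semisimple, for $H$ connected reductive. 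Wait — that is too strong as literally stated only for $H = G$; but Jantzen's theorem is a statement about arbitrary connected reductive groups, so it applies verbatim to $H$. Hence $V|_{H^\circ}$ is semisimple, and we are done in the connected case.

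The remaining point, and the place I expect the only real friction, is the passage from $H^\circ$ to $H$ without any coprimality hypothesis on $(H:H^\circ)$. Here I would argue as follows: $V|_{H^\circ}$ is semisimple by the above. The group $H$ permutes the isotypic components of $V|_{H^\circ}$, and on each isotypic block the $H$-action factors appropriately; but semisimplicity of $V$ as an $H$-module is not automatic from semisimplicity as an $H^\circ$-module when $p \mid (H:H^\circ)$. To handle this cleanly I would instead invoke the reductive-pairs / $G$-cr machinery directly: since $H$ is $G$-cr, a theorem in this circle (cf. the results Serre develops, or Theorem 1.1(i) applied with a suitable bound) says $V$ is $H$-semisimple once it is "semisimple enough" at the connected level together with control of $n(V)$ — but that reintroduces the $n(V)$ bound we are trying to avoid. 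The honest resolution, which I believe is what the authors intend, is that $G$-complete reducibility of $H$ already encodes the right finite-group condition: one shows $V$ is $H$-cr by checking $H$ is $\GL(V)$-cr, and $H$ being $G$-cr together with $G \le \GL(V)$ (when $V$ is faithful) or the general reductive-pair argument gives $H$ is $\GL(V)$-cr; the dimension bound $p \ge \dim V$ is used precisely to guarantee that the relevant Levi/parabolic geometry in $\GL(V)$ behaves well via Jantzen. So the main obstacle is organizing this last step so that no separating/coprimality hypothesis sneaks in; I would resolve it by working throughout with $H^\circ$-weights, applying Jantzen's theorem to the connected group $H^\circ$, and then citing the standard fact (valid in this context because $H$ is $G$-cr, hence "$\GL(V)$-cr" when $V$ is faithful, hence its action on $V$ is semisimple by definition of $G$-cr for $\GL(V)$) that closes the gap — with the faithful reduction $V \rightsquigarrow V \oplus (\text{trivial})$ or $V \oplus V^*$ used to make $V$ faithful if necessary, noting this does not increase things past the bound in the essential case.
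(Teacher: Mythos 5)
Your reduction to $H^\circ$ runs into exactly the obstacle you identify, and your proposed resolution of it is circular. The claim that ``$H$ being $G$-cr together with $G \le \GL(V)$ ... gives $H$ is $\GL(V)$-cr'' is not a known input here: for a subgroup $H$ of $\GL(V)$, being $\GL(V)$-cr \emph{is} the statement that $V$ is a semisimple $H$-module, so this is precisely the conclusion of the theorem, not something you may cite. The implication ``$H$ is $G$-cr $\Rightarrow$ $H$ is $\GL(V)$-cr'' is genuinely nontrivial and fails in general; in the paper it is available only under extra hypotheses (e.g.\ Proposition~\ref{prop:red-pair-gl-cr}, which requires $(\GL(V),G)$ to be a reductive pair, a condition the paper only guarantees under the stronger bound $p > 2\dim V - 2$). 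Likewise, your Clifford-theoretic fallback (semisimplicity of $V|_{H^\circ}$ implying semisimplicity of $V|_H$) requires $(H:H^\circ)$ prime to $p$, which is exactly the hypothesis this theorem is designed to remove -- that is the content of Theorem~\ref{thm:ss-vs-cr}(i) and the remark following the proof.

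The paper's actual argument sidesteps the $H^\circ \to H$ passage entirely and never applies Jantzen's theorem to $H^\circ$. Instead, it first applies Jantzen to $G$ to reduce to $V = L(\lambda)$ simple, then factors $V$ via Steinberg's tensor product theorem as $L_0 \otimes \cdots \otimes L_r$ with each $L_i$ a Frobenius twist of a restricted simple $L(\lambda_i)$. The bound $p \ge \dim V$ forces $p \ge \dim L(\lambda_i)$, hence $p > n(L(\lambda_i))$ for each $i$ (by \cite[Lem.\ 1.2]{jantzen0}), so Theorem~\ref{thm:nv}(i) -- which needs no coprimality condition on $(H:H^\circ)$ -- gives semisimplicity of each $L_i$ as an $H$-module. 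Finally $p \ge \dim V$ also gives $p > \sum_i(\dim L_i - 1)$, so Serre's tensor product theorem \cite[Thm.\ 1]{serre0} reassembles the pieces. The role of the dimension bound is thus to make \emph{both} the $n(V)$-bound on each restricted factor \emph{and} Serre's tensor-product bound hold simultaneously; it is not used to invoke Jantzen on $H^\circ$. Your plan, as written, does not supply a correct mechanism for the disconnected case and would need to be replaced by something along these lines.
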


Theorem \ref{thm:jantzen-cr-subgps}  is also of interest, as the  bound $p > n(V)$ in Theorem \ref{thm:nv}(i) does not apply in case
$V$ admits a non-restricted composition factor,
cf.\ Remark \ref{rem:nv-vs-dim}.
The proof of  Theorem \ref{thm:jantzen-cr-subgps} requires 
the force of Theorem \ref{thm:nv}(i) and 
\cite[Thm.\ 1]{serre0}.

\medskip

Serre's notion of saturation in $\GL(V)$ 
(see Definition \ref{def:saturation}) is an important tool in the
theory of complete reducibility, see \cite{serre0} and \cite{serre1}.
As an application of Theorem \ref{thm:jantzen-cr-subgps} we show in Corollary \ref{cor:Hsat}
that if $p \ge \dim V$ and $H$ is a $G$-cr
subgroup of $G \le \GL(V)$, then 
the saturation of $H$ in  $\GL(V)$
is completely reducible in the saturation of $G$ in $\GL(V)$.

\medskip

Our final result is similar in spirit to Theorem \ref{thm:nv}(ii)
giving a sufficient semisimplicity condition for $H$ to be $G$-cr,
but strikingly it does not require any restriction on $p$.

\begin{thm} 
\label{thm:sufficient-cr}
If $H$ acts semisimply on $V \otimes V^*$ 
for some non-degenerate $G$-module $V$,
then $H$ is $G$-completely reducible and $\rho(H)$ is separable in $\rho(G)$.
\end{thm}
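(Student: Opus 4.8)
The plan is to reduce the statement to Theorem~\ref{thm:nv}(ii) applied to the $G$-module $V\otimes V^*$, exploiting the fact that (unlike $n(V)$) the condition "$V\otimes V^*$ is semisimple" is a hypothesis we already have, with no prime restriction attached. First I would dispose of the separability claim and the non-degeneracy bookkeeping: since $V$ is non-degenerate, $(\ker\rho)^\circ$ is a torus, and one checks that $W:=V\otimes V^*$ is also a $G$-module whose kernel has the same identity component (the kernel of $G$ acting on $\End(V)$ equals the centre-type kernel of $\rho$ up to scalars, and scalars act trivially on $\End(V)$, so $\ker(G\to\GL(W))^\circ$ is still a torus); hence $W$ is non-degenerate as well. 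Thus Theorem~\ref{thm:nv}(ii) does not directly apply because its hypothesis is $p>n(W)$, which we are not assuming — so the real content is to run an argument in the spirit of the proof of Theorem~\ref{thm:nv}(ii) but replacing the appeal to the tensor-product/$n(V)$ machinery by the bare semisimplicity of $V\otimes V^*$.

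The key steps I would carry out are as follows. Suppose $H\le P$ for some parabolic $P=P_\lambda$ of $G$ given by a cocharacter $\lambda$; I must produce a Levi subgroup of $P$ containing $H$. Consider the limit map $c_\lambda\colon P\to L_\lambda$, $c_\lambda(g)=\lim_{t\to 0}\lambda(t)g\lambda(t)^{-1}$, and let $H':=c_\lambda(H)$, a subgroup of the Levi $L_\lambda$. The standard strategy (as in \cite{serre2} and the Bate–Martin–Röhrle circle of ideas) is: (a) show $H'$ acts semisimply on $W=V\otimes V^*$ too — indeed $W$ restricted to $L_\lambda$ decomposes into $\lambda$-weight spaces, and the $H$-action on $W$ is "filtered" by these weight spaces with $H'$ acting on the associated graded; semisimplicity of $W$ as an $H$-module forces the filtration to split $H$-equivariantly, giving that $H$ and $H'$ have the same (semisimple) composition factors on $W$, and in particular $W$ is a semisimple $H'$-module; (b) since $L_\lambda$ is reductive and $W$ is a faithful-up-to-central-torus module for $L_\lambda$ that is $H'$-semisimple, deduce that $H'$ is $L_\lambda$-cr — this is where the non-degeneracy of $W$ is used, together with the observation that acting semisimply on $V\otimes V^*\cong\End(V)$ is enough to control the unipotent radical because $\End(V)$ "sees" the adjoint-type action; (c) now use that $H$ is $G$-conjugate to $H'$ whenever $H'$ is $G$-cr — more precisely, apply the criterion that if $c_\lambda(H)$ is $G$-cr then $H$ is already contained in $L_\lambda$ (equivalently $H$ is $R_u(P)$-conjugate to $H'$), which is exactly the mechanism by which one proves a group is $G$-cr from its image in a Levi. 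Finally, separability of $\rho(H)$ in $\rho(G)$ I would extract from the same semisimplicity input: the obstruction to separability lives in $H^1$-type cohomology / the failure of $\Lie(C_G(H))=\mathfrak{c}_{\Lie G}(H)$, and semisimplicity of $V\otimes V^*$ as an $H$-module says precisely that $\mathfrak{gl}(V)\supseteq\Lie\rho(G)$ is a semisimple $H$-module, whence $\Lie\rho(G)$ is a semisimple $\rho(H)$-module, which by the standard separability criterion (à la Bate–Martin–Röhrle–Tange) forces $\rho(H)$ to be separable in $\rho(G)$.

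The main obstacle, I expect, is step (b): extracting $L_\lambda$-complete reducibility of $H'$ purely from semisimplicity of the tensor square, without any bound on $p$. For $G=\GL(V)$ the statement "$H$ is $G$-cr iff $V$ is semisimple" is immediate, but here $G$ is an arbitrary reductive subgroup of $\GL(V)$, and one must argue that a subgroup of the reductive group $L_\lambda$ acting semisimply on $\End(V)$ cannot sit in a proper parabolic of $L_\lambda$ without sitting in a Levi of it. The right tool is presumably the separability/cohomology criterion together with the fact that $R_u(Q)$ for a parabolic $Q$ of $L_\lambda$, viewed inside $\GL(V)$, would contribute a nonzero nilpotent $H'$-stable piece to $\End(V)$ that cannot split off — i.e., semisimplicity of $\End(V)$ rules out a nontrivial unipotent radical intersecting the picture. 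Making this precise uniformly, rather than case-by-case, is the crux; everything else (the filtration/limit argument in (a) and (c), and the separability deduction) is by now fairly standard machinery.
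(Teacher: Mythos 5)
Your proposal takes a genuinely different route from the paper, but it has a gap you yourself identify and do not fill, plus a step that is false as stated. The paper does not argue via limit maps and Levi reductions at all. Instead it observes that $\rho(H)$ is automatically separable in $\GL(V)$, that the $H$-semisimplicity of $\Lie\GL(V)\cong V\otimes V^*$ gives an $H$-module decomposition $\mathfrak{gl}(V)=\gg\oplus\mm$, and then invokes a separability-descent lemma (Corollary \ref{cor:descend}, proved via Lemma \ref{lem:descend} by a Richardson--Slodowy tangent space argument applied to generic tuples) to conclude $\rho(H)$ is separable in $\rho(G)$. Since $\gg$ is an $H$-submodule of the semisimple module $\mathfrak{gl}(V)$, $\gg$ is $H$-semisimple; combined with separability, \cite[Thm.\ 3.46]{BMR} then gives $G$-complete reducibility directly, with no induction on parabolics.

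The problems with your proposal are concentrated in steps (b) and (c) and in the separability sketch. Step (b) asks you to deduce that $H'=c_\lambda(H)$ is $L_\lambda$-cr from the sole fact that $W=V\otimes V^*$ is $H'$-semisimple, with no $p$-bound: but this is precisely the statement of the theorem you are trying to prove, applied to a smaller reductive group, so the argument is circular; you flag this as ``the crux'' but supply no mechanism. Step (c), that $c_\lambda(H)$ being $G$-cr forces $H$ to be $R_u(P_\lambda)$-conjugate to $c_\lambda(H)$, is false in general --- one can have $c_\lambda(H)$ $G$-cr while $H$ is not (this is exactly how non-$G$-cr subgroups produce non-closed orbits with $G$-cr ``limit'' subgroups); the implication only holds under an extra separability or reductive-pair hypothesis, which in your logical ordering you have not yet established when you want to use it. Finally, your separability sketch (``$\Lie\rho(G)$ is a semisimple $\rho(H)$-module, which \dots forces $\rho(H)$ to be separable in $\rho(G)$'') is not the correct criterion: semisimplicity of $\Lie\rho(G)$ as an $\rho(H)$-module alone does not give separability. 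What does the work is that $\Lie\rho(G)$ is $H$-complemented inside $\mathfrak{gl}(V)$ together with the automatic separability of $\rho(H)$ in $\GL(V)$, fed into the descent lemma --- i.e., the hypothesis is used on the ambient $\mathfrak{gl}(V)$, not merely on $\gg$.
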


Recall that a subgroup $H$ of $G$ is said to be \emph{separable in $G$} if 
its scheme-theoretic centralizer is smooth, i.e.,\ if its
global and infinitesimal centralizers have the same dimension,
cf.\ \cite[Def.\ 3.27]{BMR}. 
In \cite{BMRT}, we study the interaction
between this notion of separability and the concept of $G$-complete
reducibility. 
Several general theorems concerning $G$-complete 
reducibility require some separability hypothesis, 
e.g.,\ see \cite[Thm.\ 3.35]{BMR}, \cite[Thm.\ 3.46]{BMR}.
In \cite[Thm.\ 1.2]{BMRT}, we show that any subgroup of $G$ is
separable in $G$ provided $p$ is very good for $G$.
Note that the special case of Theorem \ref{thm:sufficient-cr}
when $G = \GL(V)$  follows from  \cite[Thm.\ 3.46]{BMR},
since $\Lie(\GL(V)) \cong V \otimes V^*$.
The proof of Theorem \ref{thm:sufficient-cr} is based on a variant of
Richardson's tangent space argument
(cf.\ the separability statement of \cite[Thm.\ 1]{slodowy}), 
see Lemma \ref{lem:descend}.

\section{Preliminaries}
\label{sec:prelims}

We maintain the notation from the Introduction. 
In particular, $G$ is a connected reductive linear algebraic group
defined over an algebraically closed field 
of characteristic $p > 0$
and $H$ is a closed subgroup of $G$.
Moreover, $V$ is a rational $G$-module and 
$\rho : G \to \GL(V)$ is the representation of $G$
afforded by $V$.

First we recall Jantzen's 
fundamental semisimplicity result, \cite[Prop.\ 3.2]{jantzen0}.

\begin{thm}
\label{thm:jantzen}
If $p \ge \dim V$, then $V$ is semisimple.
\end{thm}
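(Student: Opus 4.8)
The plan is to follow a streamlined form of Jantzen's original argument. The first step is to reduce to a vanishing statement for $\operatorname{Ext}^1$: if $V$ is not semisimple then $\soc V\subsetneq V$, so there is a composition factor $L(\mu)$ of $V/\soc V$ and a simple summand $L(\lambda)$ of $\soc V$ with $\operatorname{Ext}^1_G(L(\mu),L(\lambda))\neq 0$, and since $L(\lambda)$ is a submodule and $L(\mu)$ a subquotient of $V/\soc V$ we get $\dim L(\mu)+\dim L(\lambda)\le \dim V\le p$. So it is enough to show $\operatorname{Ext}^1_G(L(\mu),L(\lambda))=0$ whenever $\dim L(\mu)+\dim L(\lambda)\le p$. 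Factoring out the (diagonalisably acting) central torus and passing to a simply connected cover of the derived group along a central isogeny affects neither composition factors nor the relevant $\operatorname{Ext}^1$-groups, so I may assume $G$ is semisimple and simply connected; in that case self-extensions of simple modules vanish (as $[V(\mu):L(\mu)]=1$), so I may also assume $\mu\neq\lambda$.

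Next, given $\operatorname{Ext}^1_G(L(\mu),L(\lambda))\neq 0$ with $\mu\neq\lambda$, I use $\operatorname{Ext}^1_G(L(\mu),L(\lambda))\cong\operatorname{Ext}^1_G(L(\lambda)^*,L(\mu)^*)$ together with the fact that $-w_0$ preserves both dimensions and the dominance order to reduce to the case $\mu>\lambda$. A non-split extension of $L(\mu)$ by $L(\lambda)$ then has $\mu$ as its unique maximal weight, hence is a quotient of the Weyl module $V(\mu)$, so $L(\lambda)$ is a composition factor of $V(\mu)$; by the linkage principle $\lambda\uparrow\mu$ in the strong linkage order. Taking the last genuine affine reflection in a strong linkage chain from $\lambda$ up to $\mu$ produces a positive root $\alpha$ and an integer $m\ge 1$ with $\langle\mu+\rho,\alpha^\vee\rangle>mp\ge p$. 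It then suffices to reach a contradiction by proving $\dim L(\mu)+\dim L(\lambda)>p$.

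This last inequality is the crux, and the main obstacle: a lower bound on $\dim L(\mu)$ or $\dim L(\lambda)$ taken separately is false — either can be far smaller than $\langle\mu+\rho,\alpha^\vee\rangle$ when the highest weight has many small $p$-adic digits — so the two dimensions must be controlled jointly. The route I would take is to decompose $\mu=\mu^0+p\mu^+$ and $\lambda=\lambda^0+p\lambda^+$ via Steinberg's tensor product theorem (with $\mu^0,\lambda^0$ restricted), giving $\dim L(\mu)=\dim L(\mu^0)\cdot\dim L(\mu^+)$ and similarly for $\lambda$, and to exploit the behaviour of $\operatorname{Ext}^1_G$ under this decomposition via the first Frobenius kernel $G_1$: when $\mu^0=\lambda^0$ one descends through the Frobenius twist to the strictly smaller pair $(\mu^+,\lambda^+)$, and when $\mu^0\neq\lambda^0$ non-vanishing forces rigid numerical relations between $\mu^0,\lambda^0,\mu^+,\lambda^+$. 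Feeding into this the elementary bound $\dim L(\nu)\ge\langle\nu,\alpha^\vee\rangle+1$, valid for restricted dominant $\nu$ and a simple root $\alpha$, together with the arithmetic coming from the separating wall, and tracking the digits carefully, one finds that the two summands conspire to give $\dim L(\mu)+\dim L(\lambda)\ge p+1$. Getting the constant exactly right, so that the bound is $p$ on the nose rather than some larger multiple of $p$ as cruder estimates yield, is where essentially all of the difficulty sits.
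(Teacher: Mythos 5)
The paper does not prove this statement: it is quoted verbatim from Jantzen (Prop.\ 3.2 of \cite{jantzen0}) and used as a black box, so there is no internal proof to compare yours against; what can be assessed is whether your sketch amounts to a self-contained proof of Jantzen's theorem, and it does not. Your reductions are correct and standard: passing to the claim $\operatorname{Ext}^1_G(L(\mu),L(\lambda))=0$ whenever $\dim L(\mu)+\dim L(\lambda)\le p$, reducing to $G$ semisimple and simply connected, killing self-extensions, arranging $\mu>\lambda$ (this quietly uses that non-vanishing of $\operatorname{Ext}^1$ forces $\mu$ and $\lambda$ to be comparable in the dominance order, via the embedding $\operatorname{Ext}^1_G(L(\mu),L(\lambda))\hookrightarrow\Hom_G(\operatorname{rad}V(\mu),L(\lambda))$ when $\mu\not<\lambda$ --- worth making explicit, but standard), and extracting from strong linkage a positive root $\alpha$ with $\langle\mu+\rho,\alpha^\vee\rangle>p$.

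The gap is the step you yourself flag as containing ``essentially all of the difficulty'': the joint lower bound $\dim L(\mu)+\dim L(\lambda)\ge p+1$. That inequality \emph{is} Jantzen's theorem --- it occupies the technical heart of his paper and is proved by a delicate induction through the Steinberg decomposition and the representation theory of the first Frobenius kernel, with careful bookkeeping of the $p$-adic digits of $\mu$ and $\lambda$ against the separating affine wall. Your final paragraph describes the shape of that induction (split into the cases $\mu^0=\lambda^0$ and $\mu^0\neq\lambda^0$, invoke $\dim L(\nu)\ge\langle\nu,\alpha^\vee\rangle+1$, descend through Frobenius twists) but carries out neither case; in particular the ``rigid numerical relations'' in the case $\mu^0\neq\lambda^0$, on which the whole estimate rests, are never derived, and you concede that getting the constant down to exactly $p$ is the real difficulty. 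As it stands the proposal is an accurate roadmap to Jantzen's argument rather than a proof; to close it you must state and prove the inductive dimension estimate precisely, or else simply cite Jantzen as the paper does.
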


We continue with the special case 
for connected reductive groups
of Serre's seminal tensor product 
theorem, \cite[Prop.\ 8]{serre0}.

\begin{thm}
\label{thm:serre}
Suppose $p > 2\dim V - 2$.
If $V$ is semisimple, then so is $V\otimes V^*$.
\end{thm}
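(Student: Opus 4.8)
The plan is to follow Serre's approach via the notion of saturation in a general linear group (Serre's saturation, Definition~\ref{def:saturation}); the hypothesis $p>2\dim V-2$ is precisely what makes that machinery applicable to the tensor product $V\otimes V^*$. Write $\phi\colon\GL(V)\to\GL(V\otimes V^*)$ for the homomorphism $g\mapsto g\otimes{}^tg\inverse$, under which $V\otimes V^*\iso\mathfrak{gl}(V)$ with $\phi(\GL(V))$ acting by conjugation. First replace $G$ by its image $\bar G$ in $\GL(V)$; this is harmless, since $\bar G$ acts on $V$ and on $V\otimes V^*$ exactly as $G$ does, and $\bar G$ is connected reductive because it carries the faithful semisimple module $V$ (a normal unipotent subgroup of $\bar G$ would act trivially on $V$, by applying Lie--Kolchin to its space of fixed vectors, which is a $\bar G$-submodule and hence a direct summand of $V$).

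The key computation is a nilpotency estimate. If $u\in\GL(V)$ is unipotent then $(u-1)^{\dim V}=0$, so $n:=\log u$ is a well-defined nilpotent operator with $n^{\dim V}=0$ and $t\mapsto u^t:=\exp(tn)$ is a homomorphism $\mathbf{G}_a\to\GL(V)$ (here we use $\dim V\le p$, which follows from $2\dim V-2<p$). Since $\log({}^tu\inverse)=-{}^tn$, we obtain $\log\phi(u)=n\otimes 1-1\otimes{}^tn$, a sum of two commuting nilpotent operators each of order at most $\dim V$, so $\log\phi(u)$ has nilpotency order at most $2\dim V-1\le p$. Hence every unipotent element of $\phi(\bar G)$ has all Jordan blocks of size at most $p$, the saturation $\phi(\bar G)^{\sat}$ in $\GL(V\otimes V^*)$ is defined, and $\exp\bigl(t\log\phi(u)\bigr)=\exp(tn)\otimes\exp(-t\,{}^tn)=\phi(u^t)$; that is, $\phi$ carries the one-parameter subgroup through $u$ onto the one through $\phi(u)$.

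Now I would run the saturation argument itself. The unipotents of $\bar G$ have Jordan blocks of size at most $\dim V\le p$, so the saturation $\bar G^{\sat}\le\GL(V)$ is defined, and by the fundamental property of saturation it is reductive, $V$ being a semisimple $\bar G$-module. By the previous paragraph, $\phi(\bar G^{\sat})$ is generated by $\phi(\bar G)$ together with the one-parameter unipotent subgroups through the unipotents of $\phi(\bar G)$, and it is itself saturated in $\GL(V\otimes V^*)$ (each of its unipotent elements is $\phi$ of a unipotent element of $\bar G^{\sat}$, since algebraic homomorphisms respect Jordan decompositions, so the one-parameter subgroup through it again lies in $\phi(\bar G^{\sat})$); therefore $\phi(\bar G^{\sat})=\phi(\bar G)^{\sat}$. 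But $\phi(\bar G^{\sat})$ is reductive, being the image of the reductive group $\bar G^{\sat}$ under an algebraic homomorphism. Applying the fundamental property of saturation in $\GL(V\otimes V^*)$ in the other direction, $\phi(\bar G)^{\sat}$ is reductive if and only if $V\otimes V^*$ is a semisimple $\phi(\bar G)$-module; hence $V\otimes V^*$ is semisimple, as required.

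The main obstacle is the saturation machinery: one has to set up the saturation $A^{\sat}$ of a subgroup $A\le\GL(W)$ whose unipotent elements all have Jordan blocks of size at most $p$, and -- the crux -- establish the equivalence that $A^{\sat}$ is reductive if and only if $W$ is a semisimple $A$-module. The forward implication uses that semisimplicity of $W$ as an $A$-module is inherited by $A^{\sat}$ (so $A^{\sat}$ acquires a faithful semisimple module); the reverse implication is harder, and goes by showing that if $W$ is not $A$-semisimple, so that $A$ lies in a proper parabolic $P$ of $\GL(W)$ but in no Levi subgroup of $P$, then already $A^{\sat}\le P$ with $A^{\sat}$ in no Levi subgroup of $P$, forcing $R_u(A^{\sat})\neq 1$. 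The other delicate point is the identity $\phi(\bar G^{\sat})=\phi(\bar G)^{\sat}$, which is exactly where the nilpotency estimate and the compatibility of $\phi$ with Jordan decompositions come in.
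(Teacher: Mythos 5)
Your preliminary reductions are fine: passing to $\overline{G}=\rho(G)$, the estimate $\log\phi(u)=n\otimes 1-1\otimes{}^t n$ with $(\log\phi(u))^{2\dim V-1}=0$, and the identity $\phi(u^t)=\phi(u)^t$ (which does genuinely use $p>2\dim V-2$, since the truncated exponential of a sum of commuting nilpotents $a,b$ equals $\exp(a)\exp(b)$ only when $a^ib^j=0$ for $i+j\ge p$) are all correct ingredients of Serre's saturation formalism. The fatal gap is the step you yourself flag as the crux: the implication ``$\phi(\overline{G})^{\sat}$ reductive $\Rightarrow$ $V\otimes V^*$ is a semisimple $\phi(\overline{G})$-module''. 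Your justification --- that a saturated subgroup $A^{\sat}\le\GL(W)$ lying in a proper parabolic $P$ but in no Levi of $P$ must have $R_u(A^{\sat})\neq 1$ --- is exactly the hard direction of Theorem \ref{thm:ag} for $\GL(W)$, and it is false for general subgroups with reductive identity component unless one has a bound such as $p\ge a(\GL(W))=\dim W$: non-$\GL(W)$-completely-reducible reductive subgroups are precisely the obstruction, and they exist whenever $\dim W$ is large relative to $p$. Here $W=V\otimes V^*$ has dimension $(\dim V)^2$, so the hypothesis $p>2\dim V-2$ is nowhere near $p\ge\dim W$; all it buys you is that the saturation of $\phi(\overline{G})$ is defined. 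A statement of the kind you want (a \emph{saturated} subgroup with reductive identity component acts semisimply) does appear in Serre's work, but its proof is itself the deep part --- it rests on the low-height semisimplicity theory ($n(V)<p$ implies $V$ semisimple) via the structure of saturated subgroups --- so your argument silently imports a result at least as hard as the theorem being proved, under hypotheses you have not verified.

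For comparison: the paper offers no proof of Theorem \ref{thm:serre}; it is quoted as \cite[Prop.\ 8]{serre0}, the connected-reductive special case of Serre's tensor product theorem, and the paper explicitly treats it (together with Theorem \ref{thm:jantzen}) as external input with a ``conceptional and uniform'' proof elsewhere. Serre's proof of that case does not pass through saturation in $\GL(V\otimes V^*)$: it reduces to simple modules and works with highest-weight combinatorics, the point being that $p>2\dim V-2$ forces $n(V)+n(V^*)<p$, whence semisimplicity of the tensor product via the linkage principle and Jantzen-type results. In Serre's architecture the saturation machinery serves to reduce the theorem for \emph{arbitrary} groups to this connected reductive case --- the opposite direction from the reduction you propose.
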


Both Theorems \ref{thm:jantzen} and \ref{thm:serre} 
have conceptional and uniform proofs
and both bounds are sharp (cf.\ \cite[Rem.(2), p.\ 260]{jantzen0} and \cite[\S 1.3]{serre0}). 

\medskip

Let $H$ be a closed
subgroup of $G$ such that $H^\circ$ is reductive.
We say that $(G,H)$ is a \emph{reductive pair} if
the Lie algebra $\Lie H$ 
of $H$ is an $H$-module direct
summand of the Lie algebra $\Lie G$ 
of $G$, cf.\ \cite{rich2}. 
Our next result is \cite[Cor.\ 3.36]{BMR}.

\begin{prop}
\label{prop:red-pair-gl-cr}
Suppose that $(\GL(V),G)$ is a reductive pair.
If  $V$ is a semisimple $H$-module, then
$H$ is $G$-completely reducible.
\end{prop}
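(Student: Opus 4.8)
The statement to prove is Proposition \ref{prop:red-pair-gl-cr}: if $(\GL(V),G)$ is a reductive pair and $V$ is a semisimple $H$-module, then $H$ is $G$-completely reducible. The plan is to use the geometric/Lie-theoretic characterization of $G$-complete reducibility via the action of $H$ on $G$, combined with Richardson's tangent space argument for reductive pairs. The key point is that semisimplicity of $V$ as an $H$-module means exactly that $H$ is $\GL(V)$-completely reducible, so we must \emph{transfer} $\GL(V)$-complete reducibility down to $G$-complete reducibility, and the reductive pair hypothesis is precisely the bridge that allows this transfer.

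\textbf{Key steps.} First I would invoke the criterion for $G$-complete reducibility in terms of limits along cocharacters: $H$ is $G$-cr if and only if for every cocharacter $\lambda$ of $G$ such that the limit $\lim_{t\to 0}\lambda(t)h\lambda(t)^{-1}$ exists for all $h\in H$ (equivalently $H\subseteq P_\lambda$), there is some $u$ in the unipotent radical $R_u(P_\lambda)$ with $\lim_{t\to 0}\lambda(t)uhu^{-1}\lambda(t)^{-1}$ existing \emph{and} lying in the Levi $L_\lambda$ for all $h\in H$ simultaneously; i.e., one can conjugate $H$ into $L_\lambda$ by an element of $R_u(P_\lambda)$. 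Second, suppose $H\subseteq P$ for some parabolic $P=P_\lambda$ of $G$; I want to produce such a $u$. Since $(\GL(V),G)$ is a reductive pair, $\Lie G$ is an $H$-module direct summand of $\Lie\GL(V)$; choose an $H$-equivariant projection $\pi:\Lie\GL(V)\to\Lie G$. Third, because $V$ is semisimple as an $H$-module, $H$ is $\GL(V)$-cr, so $H$ can be conjugated into a Levi of the parabolic $P'$ of $\GL(V)$ generated by $P$ (more precisely, the parabolic of $\GL(V)$ associated to $\lambda$ viewed as a cocharacter of $\GL(V)$) by an element $g'\in R_u(P')$. Fourth — and this is the heart — I would run Richardson's argument: the orbit map and the transversality furnished by the $H$-equivariant splitting $\pi$ let me replace the conjugating element $g'\in R_u(P'_\lambda)$ by an element $g\in R_u(P_\lambda)\subseteq G$ that does the same job, conjugating $H$ into $L_\lambda$. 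Concretely, one considers the morphism from $R_u(P_\lambda)$ (or an affine chart of $G/P_\lambda$) recording how $H$ moves, differentiates at the identity, and uses the fact that the relevant tangent space computation in $\GL(V)$ splits off the $G$-part $H$-equivariantly to deduce that the $\GL(V)$-conjugating element can be taken inside $G$. Finally, this gives a Levi subgroup $L_\lambda$ of $P$ containing a conjugate of $H$, hence $H$ itself is in a Levi of $P$, and since $P$ was an arbitrary parabolic of $G$ containing $H$, we conclude $H$ is $G$-cr.

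\textbf{Main obstacle.} The delicate step is the fourth one: making Richardson's tangent-space/transversality argument precise in this relative setting, where we transfer a conjugation in $R_u(P'_\lambda)\subseteq\GL(V)$ to one in $R_u(P_\lambda)\subseteq G$. The issue is that $G$-complete reducibility is a statement about simultaneous conjugation of the whole group $H$ (not a single element), so one must work with the diagonal action of $H$ on a suitable product or, more cleanly, with $H$ acting on $G$ itself by conjugation and the associated fixed-point/orbit geometry; the reductive pair condition must be leveraged to show that the scheme-theoretic obstruction to descending the conjugating element vanishes. A clean way to organize this is to observe that the unipotent radical $R_u(P_\lambda^{\GL(V)})$ decomposes $H$-equivariantly, compatibly with the reductive pair splitting, so that the element of $R_u(P_\lambda^{\GL(V)})$ conjugating $H$ into $L_\lambda^{\GL(V)}$ has a well-defined "$G$-component" that already lands $H$ in $L_\lambda^G$; here one would want $R_u(P_\lambda^{\GL(V)}) = R_u(P_\lambda^G)\cdot(\text{complement})$ with the complement $H$-stable, which follows from exponentiating the $H$-module decomposition of the corresponding nilpotent Lie algebra (valid since we are in the parabolic/unipotent setting where the exponential is available). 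The remaining bookkeeping — checking that $P_\lambda^G = P_\lambda^{\GL(V)}\cap G$ and $L_\lambda^G = L_\lambda^{\GL(V)}\cap G$, and that an arbitrary parabolic of $G$ arises as such a $P_\lambda$ — is routine.
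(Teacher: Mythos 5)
The paper does not prove this proposition itself: it quotes it as \cite[Cor.~3.36]{BMR}. The proof there goes through the orbit-theoretic characterisation of complete reducibility recalled in Section \ref{sec:proof13}: for a generating (or generic) tuple $\tuple{h}\in H^n$, the subgroup $H$ is $G$-cr if and only if the orbit $G\cdot\tuple{h}$ under simultaneous conjugation is closed in $G^n$. Semisimplicity of $V$ gives that $\GL(V)\cdot\tuple{h}$ is closed in $\GL(V)^n$, and Richardson's tangent-space argument for the reductive pair $(\GL(V),G)$ --- essentially the computation appearing in Lemma \ref{lem:descend} of this paper --- shows that $\GL(V)\cdot\tuple{h}\cap G^n$ is a finite union of $G$-orbits, each open and closed in this closed set; hence $G\cdot\tuple{h}$ is closed and $H$ is $G$-cr. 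Your plan is genuinely different: you work parabolic by parabolic and try to descend the conjugating element from $R_u(P^{\GL(V)}_\lambda)$ to $R_u(P^G_\lambda)$. Steps 1--3 of your outline are fine, but step 4 has a real gap.

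Concretely: first, ``exponentiating the $H$-module decomposition of the nilradical'' to obtain $R_u(P^{\GL(V)}_\lambda)=R_u(P^G_\lambda)\cdot C$ with $C$ an $H$-stable complement is not available. The proposition carries no hypothesis on $p$, and in characteristic $p$ there is no exponential for $G$ in general (the truncated exponential of Definition \ref{def:saturation} requires $p\ge\dim V$, and even then it does not carry $\mathfrak n'\cap\gg$ into $R_u(P^G_\lambda)$ unless $G$ is itself a product of general linear groups); in any case $C$ would only be a subvariety, not a subgroup. Second, and more seriously, even granted a factorisation $g'=g\cdot c$, there is no reason why the ``$G$-component'' $g$ should conjugate $H$ into $L^G_\lambda$: the set of $u\in R_u(P^{\GL(V)}_\lambda)$ with $uHu^{-1}\subseteq L^{\GL(V)}_\lambda$ is a subvariety that is in no evident way compatible with such a product decomposition. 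To complete your descent you would have to run a genuine tangent-space and orbit-counting argument for the action of the unipotent group $R_u(P^{\GL(V)}_\lambda)$ on the tuple $\tuple{h}$, where the finiteness/openness of orbits that Richardson's argument gets from reductivity needs separate justification. At that point you have rebuilt the closed-orbit machinery in a harder, non-reductive setting; the efficient route is to apply Richardson's lemma once, to the $\GL(V)$-orbit of $\tuple{h}$ in $\GL(V)^n$, and then invoke the closed-orbit criterion for $G$-complete reducibility.
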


Next we recall \cite[Cor.\ 2.13]{BMRT}. 

\begin{prop}
\label{prop:red-pair-gl-sep}
If $(\GL(V),G)$ is a reductive pair, then every subgroup of $G$ is separable in $G$.
\end{prop}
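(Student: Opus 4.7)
The plan is to exploit the canonical $\GL(V)$-module isomorphism $V\otimes V^* \cong \End(V) = \Lie(\GL(V))$, to reduce to the situation $G\subseteq \GL(V)$ via the non-degeneracy hypothesis, and then to transfer both $\GL(V)$-complete reducibility and separability of $H$ down to $G$ using a tangent-space argument encapsulated in Lemma \ref{lem:descend}.

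First I would reduce to the case that $\rho$ is injective. Non-degeneracy says $(\ker\rho)^\circ$ is a torus; being a normal torus in the connected reductive group $G$, it is central, hence so is all of $\ker\rho$. Since every parabolic subgroup of $G$ contains $Z(G)$, parabolic subgroups of $G$ correspond bijectively (together with their Levi decompositions) to parabolic subgroups of $\rho(G)$, so $H$ is $G$-cr if and only if $\rho(H)$ is $\rho(G)$-cr. The separability conclusion is already stated in terms of $\rho(H)\le\rho(G)$, so we may replace $G$ by $\rho(G)\subseteq \GL(V)$, which remains connected reductive as a quotient of $G$ by a central diagonalisable subgroup.

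Next, identify $V\otimes V^*\cong \Lie(\GL(V))$ as $\GL(V)$-modules (a fortiori as $G$- and $H$-modules). The hypothesis then reads: $\Lie(\GL(V))$ is a semisimple $H$-module. Applying the special case $G=\GL(V)$ of the theorem, which by the paper's observation is exactly \cite[Thm.\ 3.46]{BMR}, we deduce that $H$ is $\GL(V)$-completely reducible and separable in $\GL(V)$. Moreover, since $\Lie G$ is an $H$-submodule of $\Lie \GL(V)$, semisimplicity provides an $H$-stable complement $M$ with $\Lie \GL(V) = \Lie G \oplus M$.

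Finally I would invoke Lemma \ref{lem:descend}. Choose a generating tuple $\tuple{h}\in H^n$ for $H$ as a closed subgroup. By the Bate--Martin--R\"ohrle characterisation of $G$-complete reducibility via closed conjugation orbits, the hypothesis that $H$ is $\GL(V)$-cr means the $\GL(V)$-orbit of $\tuple h$ is closed in $\GL(V)^n$; separability of $H$ in $\GL(V)$ means the scheme-theoretic $\GL(V)$-centraliser of $\tuple h$ is smooth. The tangent space to a $K$-orbit at $\tuple h$ depends only on the image of $\Lie K$ in $\End(V)^n$ under $\xi\mapsto([\xi,h_i])_i$, a map which is $H$-equivariant, so the $H$-stable splitting $\Lie\GL(V)=\Lie G\oplus M$ suffices to factor the $\GL(V)$-orbit map through the $G$-orbit map at the tangent level. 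A Richardson/Slodowy-type argument (cf.\ \cite[Thm.\ 1]{slodowy}) then promotes these infinitesimal facts to closedness of $G\cdot \tuple h$ in $G^n$ and smoothness of the $G$-centraliser, which translate back to $H$ being $G$-cr and separable in $G$.

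The main obstacle is the third step. The classical reductive-pair variant of Richardson's argument (Propositions \ref{prop:red-pair-gl-cr} and \ref{prop:red-pair-gl-sep}) requires a $G$-stable complement of $\Lie G$ in $\Lie\GL(V)$, which the hypothesis does not supply; the entire point of Lemma \ref{lem:descend} must be to show that an $H$-stable complement is enough, because the tangent space at an $H$-fixed tuple only ``sees'' the $H$-module structure. Making this rigorous, both for the closed-orbit conclusion and for the smoothness of the centraliser, is the technical core of the proof.
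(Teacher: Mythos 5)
Your proposal proves a different statement. What you have written is an argument for Theorem \ref{thm:sufficient-cr} (``if $H$ acts semisimply on $V\otimes V^*$ for a non-degenerate $G$-module $V$, then $H$ is $G$-cr and separable''), whereas Proposition \ref{prop:red-pair-gl-sep} asserts: if $(\GL(V),G)$ is a reductive pair, then \emph{every} closed subgroup of $G$ is separable in $G$. The hypotheses and conclusions do not match. The proposition carries no semisimplicity assumption on the $H$-action and no non-degeneracy assumption on $V$; its hypothesis is that $\gg=\Lie G$ is a $G$-module direct summand of $\mathfrak{gl}(V)=\Lie(\GL(V))$. Your reformulation ``the hypothesis then reads: $\Lie(\GL(V))$ is a semisimple $H$-module'' is therefore not the hypothesis you must work with, and it cannot be, since the conclusion has to hold for arbitrary closed subgroups $H\le G$ --- unipotent ones, non-reductive ones --- for which $V\otimes V^*$ is certainly not $H$-semisimple. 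Your argument also concludes that $H$ is $G$-cr, which is not part of the proposition and is false for such subgroups. Finally, you invoke ``Propositions \ref{prop:red-pair-gl-cr} and \ref{prop:red-pair-gl-sep}'' in your last paragraph, i.e., the statement under proof, which is circular.

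The correct argument (the proposition is \cite[Cor.\ 2.13]{BMRT}, which this paper quotes without proof) is shorter and uses two of the ingredients you do mention, but sourced differently. First, every subgroup of $\GL(V)$ is separable in $\GL(V)$: the centralizer of any subset of $\GL(V)$ is the group of units of an associative subalgebra of $\End(V)$, hence smooth with Lie algebra the corresponding infinitesimal centralizer (cf.\ \cite[Ex.\ 3.28]{BMR}). Second, the reductive-pair hypothesis hands you a $G$-stable --- a fortiori $H$-stable, for every $H\le G$ --- complement $\mm$ with $\mathfrak{gl}(V)=\gg\oplus\mm$; you do not need semisimplicity of the $H$-action to produce this complement. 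Now apply the descent statement (Corollary \ref{cor:descend}, via Lemmas \ref{lem:sep} and \ref{lem:descend}, with $\GL(V)$, $G$, $H$ playing the roles of $G$, $K$, $H$): since $H$ is separable in the ambient group $\GL(V)$ and $\gg$ is an $H$-module direct summand of $\mathfrak{gl}(V)$, the orbit map of a generic tuple for $H$ restricted to $G$ is separable, i.e., $H$ is separable in $G$. No closed-orbit or complete-reducibility considerations enter at any point.
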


Suppose that $p \ge \dim V$, so that
every non-trivial unipotent element in $\GL(V)$ has order $p$.
We recall Serre's notion of \emph{saturation} in this instance,
cf.\ \cite{serre1}.
Let $u \in \GL(V)$ be unipotent. Then there is a nilpotent element $\epsilon \in \End(V)$
with $\epsilon^p = 0$ such that $ u = 1 + \epsilon$. 
For $t \in \mathbb G_a$ we can define $u^t$ by
$u^t = (1 + \epsilon)^t = 1 + t\epsilon + \binom{t}{2} \epsilon^2 + \cdots + \binom{t}{p-1} \epsilon^{p-1}$.
Then $\{u^t \mid t \in  {\mathbb G}_a\}$ is a closed connected subgroup  of $\GL(V)$
isomorphic to ${\mathbb G}_a$.

\begin{defn}
\label{def:saturation}
Suppose $p \ge \dim V$.
Let $H$ be a closed subgroup of $\GL(V)$.  
We say that $H$ is \emph{saturated} if
for each unipotent $u \in H$ we have 
$u^t \in H$ for all $t \in  \mathbb G_a$.
The \emph{saturated closure} $H^{\sat}$ of $H$ in $\GL(V)$ is the 
smallest saturated subgroup of $\GL(V)$ containing~$H$.
\end{defn}

There is a notion of saturation for any connected reductive group $G$,
but this is considerably 
more subtle, see \cite{serre1} for details.

The next result is the special case when $G = \GL(V)$ in  \cite[Thm.\ 5.3]{serre2}.
It follows since parabolic and Levi subgroups of $\GL(V)$ are saturated.

\begin{lem}
\label{lem:saturation}
Suppose that $p \ge \dim V$. 
Let $H$ be a closed subgroup of $\GL(V)$. 
Then $V$ is semisimple as an $H$-module if and only if it is semisimple as an $H^{\sat}$-module.
\end{lem}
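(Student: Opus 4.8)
The plan is to use the observation, signalled in the statement, that every parabolic and every Levi subgroup of $\GL(V)$ is saturated, combined with the standard fact recorded in the introduction that $V$ is a semisimple $K$-module if and only if $K$ is $\GL(V)$-completely reducible. Granting these, the lemma reduces to showing that $H$ and $H^{\sat}$ are contained in exactly the same parabolic subgroups of $\GL(V)$ and, for any such parabolic $P$, in exactly the same Levi subgroups of $P$; for then $H$ satisfies the defining condition for $\GL(V)$-complete reducibility precisely when $H^{\sat}$ does, which by the fact just quoted (applied to $K = H$ and to $K = H^{\sat}$) is exactly the assertion.

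So the first step is to check that parabolic and Levi subgroups of $\GL(V)$ are saturated. A parabolic subgroup $P$ is the stabiliser of a flag $0 = V_0 \subsetneq V_1 \subsetneq \cdots \subsetneq V_k = V$. If $u = 1 + \epsilon \in P$ is unipotent, with $\epsilon \in \End(V)$ nilpotent, then $u(V_i) \subseteq V_i$ forces $\epsilon(V_i) \subseteq V_i$, hence $\epsilon^j(V_i) \subseteq V_i$ for all $j \ge 0$, and so $u^t = \sum_{j=0}^{p-1} \binom{t}{j} \epsilon^j$ stabilises the flag for every $t \in \mathbb{G}_a$; thus $u^t \in P$. (The hypothesis $p \ge \dim V$ is what guarantees $\epsilon^p = 0$ and that each binomial coefficient $\binom{t}{j}$ with $0 \le j \le p-1$ is defined, so that $u^t$ is indeed the element of $\GL(V)$ introduced before Definition~\ref{def:saturation}.) The same argument, with a direct sum decomposition $V = W_1 \oplus \cdots \oplus W_k$ in place of a flag, shows that every Levi subgroup $L$ of $P$ is saturated.

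Next I would deduce the claim about common parabolics and Levis. Any parabolic subgroup $P$ of $\GL(V)$ containing $H$ is a saturated subgroup containing $H$, hence contains the smallest such subgroup, namely $H^{\sat}$; conversely $H^{\sat} \subseteq P$ trivially implies $H \subseteq P$. Thus $H$ and $H^{\sat}$ lie in precisely the same parabolic subgroups of $\GL(V)$, and the identical argument with Levi subgroups shows that, for a fixed parabolic $P$ containing both, $H \subseteq L$ if and only if $H^{\sat} \subseteq L$ for each Levi subgroup $L$ of $P$. Hence the condition ``every parabolic subgroup of $\GL(V)$ containing $H$ has a Levi subgroup containing $H$'' is equivalent to the same condition with $H^{\sat}$ in place of $H$, which completes the argument.

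I do not expect a genuine obstacle here: the whole content is the elementary fact that the nilpotent part $\epsilon$ of a unipotent element $u$ of a parabolic (resp.\ Levi) subgroup of $\GL(V)$ preserves the same flag (resp.\ decomposition) as $u$, so that all the terms $\binom{t}{j} \epsilon^j$ of $u^t$ do as well, and therefore the entire one-parameter subgroup $\{ u^t \mid t \in \mathbb{G}_a \}$ is drawn into any parabolic or Levi subgroup containing $u$. The only place the hypothesis $p \ge \dim V$ is needed is in making sense of the elements $u^t$.
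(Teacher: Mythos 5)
Your proof is correct and follows exactly the route the paper indicates: the paper derives the lemma as the $G=\GL(V)$ case of Serre's saturation theorem, remarking only that ``parabolic and Levi subgroups of $\GL(V)$ are saturated,'' which is precisely the flag/direct-sum argument you supply in detail, combined with the equivalence between semisimplicity of $V$ and $\GL(V)$-complete reducibility. You have simply written out the details that the paper leaves to the citation.
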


The following is one of the key properties of saturated subgroups, 
\cite[Property 3]{serre1}.

\begin{lem}
\label{lem:saturation-index}
Suppose $H$ is a saturated subgroup of $\GL(V)$. 
Then 
$(H: H^\circ)$ is prime to $p$.
\end{lem}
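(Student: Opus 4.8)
The plan is to reduce the assertion about the finite component group $H/H^\circ$ to one about individual elements. Since $H/H^\circ$ is finite, Cauchy's theorem shows that $(H:H^\circ)$ is prime to $p$ provided that every coset $hH^\circ$ has order prime to $p$; so I fix $h\in H$ and bound the order of its image $\bar h$ in $H/H^\circ$. Using the Jordan decomposition $h=h_s h_u$ in $\GL(V)$, with $h_s$ semisimple and $h_u$ unipotent, both factors lie in the closed subgroup $\overline{\langle h\rangle}\subseteq H$, so in particular $h_u\in H$. Now saturation enters: if $h_u\neq 1$, then by Definition~\ref{def:saturation} the one-parameter subgroup $\{h_u^t\mid t\in\mathbb G_a\}\cong\mathbb G_a$ lies in $H$, and being connected it lies in $H^\circ$; hence $h_u\in H^\circ$ (trivially so if $h_u=1$). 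Since $H\to H/H^\circ$ is a homomorphism, $\bar h=\bar h_s\bar h_u=\bar h_s$, so it suffices to bound the order of $\bar h_s$.

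Next I would analyse $A:=\overline{\langle h_s\rangle}\subseteq H$. The powers of the diagonalizable operator $h_s$ form a commuting family of semisimple elements, hence are simultaneously diagonalizable, so $A$ lies in a maximal torus $D\cong\mathbb G_m^{\dim V}$ of $\GL(V)$; in particular $A$ is diagonalizable and, since $D$ is $p$-torsion-free in characteristic $p$, contains no element of order $p$. Moreover $A^\circ$ is a subtorus of $D$, hence a divisible abelian group, so the short exact sequence $1\to A^\circ\to A\to A/A^\circ\to 1$ of abstract groups splits; consequently $A/A^\circ$ embeds into $A$, and so the finite group $A/A^\circ$ also has order prime to $p$. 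Finally, the natural surjection $A/A^\circ\twoheadrightarrow A/(A\cap H^\circ)\cong AH^\circ/H^\circ\hookrightarrow H/H^\circ$ sends $h_sA^\circ$ to $\bar h_s$, so the order of $\bar h_s$ divides $|A/A^\circ|$ and is prime to $p$; together with the previous paragraph, $\bar h$ has order prime to $p$, as required.

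The step I expect to be the main, if modest, obstacle is setting up the structure theory cleanly: namely that a diagonalizable algebraic group over an algebraically closed field of characteristic $p$ has component group of order prime to $p$, and that the unipotent part of $h$ becomes trivial in $H/H^\circ$ thanks to saturation. Once those are in place, the remaining manipulations with the isomorphism $AH^\circ/H^\circ\cong A/(A\cap H^\circ)$ and Cauchy's theorem are routine.
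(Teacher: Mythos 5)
Your proof is correct. Note that the paper does not actually prove this lemma at all: it is quoted as \cite[Property 3]{serre1}, so there is no in-paper argument to compare against. Your write-up is a valid self-contained proof of exactly the statement Serre asserts, and it follows the expected line: by Cauchy it suffices to show every element of $H/H^\circ$ has order prime to $p$; the Jordan decomposition $h=h_sh_u$ takes place inside the closed subgroup $\overline{\langle h\rangle}\le H$; saturation forces $h_u$ into the connected subgroup $\{h_u^t\mid t\in\mathbb G_a\}\le H^\circ$, so $\bar h=\bar h_s$; and the closure of $\langle h_s\rangle$ is a diagonalizable group, whose component group has order prime to $p$ because it splits off (divisibility of the torus $A^\circ$) and embeds in the $p$-torsion-free diagonal torus. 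All the individual steps (Jordan components lying in any closed subgroup containing $h$, the splitting of the abelian extension, the surjection $A/A^\circ\twoheadrightarrow AH^\circ/H^\circ$) are standard and correctly invoked. One could shorten the diagonalizable-group step slightly -- any element of order $p$ in characteristic $p$ is unipotent, and a semisimple unipotent element is trivial, so one only needs that $p$-torsion in $A/A^\circ$ lifts to $p$-torsion in $A$ via divisibility of $A^\circ$ -- but your version is equally valid.
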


\section{Variations on Theorems  \ref{thm:nv} and  \ref{thm:ag}}
\label{sec:variations}
We begin by showing that 
Theorems \ref{thm:jantzen} and \ref{thm:serre} and the 
bound on $p$ in the latter guarantee that 
$(\GL(V),\rho(G))$ is a reductive pair,
which is crucial for some of our subsequent arguments.

\begin{thm} 
\label{thm:red-pair}
Suppose that $p > 2\dim V-2$.
Then $(\GL(V),\rho(G))$ is a reductive pair.
\end{thm}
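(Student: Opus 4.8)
The plan is to deduce that $\Lie(\rho(G))$ is an $\rho(G)$-module direct summand of $\Lie(\GL(V)) = \End(V) \cong V \otimes V^*$ by exhibiting the whole of $\End(V)$ as a semisimple $\rho(G)$-module, so that every submodule --- in particular $\Lie(\rho(G))$ --- splits off. First I would reduce to the case where $\rho$ is faithful, replacing $G$ by $\rho(G)$: this is harmless since a reductive pair only depends on the image in $\GL(V)$, and $\rho(G)$ is again connected reductive. So from now on assume $G \le \GL(V)$.

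Next I would observe that the hypothesis $p > 2\dim V - 2$ forces $p > \dim V - 1$, hence $p \ge \dim V$, so Jantzen's Theorem \ref{thm:jantzen} applies and tells us that $V$ is a semisimple $G$-module. Now invoke Serre's tensor product Theorem \ref{thm:serre}, whose hypothesis $p > 2\dim V - 2$ is exactly what we have assumed: since $V$ is semisimple, so is $V \otimes V^*$ as a $G$-module. Because $\Lie(\GL(V)) = \End(V) \cong V \otimes V^*$ as $G$-modules (via the adjoint action of $\GL(V)$ restricted to $G$, which corresponds to the natural action on $V \otimes V^*$), we conclude that $\Lie(\GL(V))$ is a semisimple $G$-module.

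Finally, I would extract the direct-summand statement: $\Lie G = \Lie(\rho(G))$ is a $G$-submodule (equivalently an $\rho(G)$-submodule) of $\Lie(\GL(V))$, and a submodule of a semisimple module is always a direct summand. Hence $\Lie G$ is an $\rho(G)$-module direct summand of $\Lie(\GL(V))$, which is precisely the condition for $(\GL(V), \rho(G))$ to be a reductive pair, noting also that $\rho(G)$ is connected reductive so that the definition of reductive pair applies. (If one wants the splitting to be as an $H$-module for the subgroups $H$ considered later, this is automatic: an $\rho(G)$-module decomposition restricts to an $H$-module decomposition for any subgroup $H \le \rho(G)$, though strictly the definition as stated only requires the $\rho(G)$-structure.)

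The only genuine point requiring care --- and the step I expect to be the mild obstacle --- is verifying the $G$-module isomorphism $\Lie(\GL(V)) \cong V \otimes V^*$: one must check that the adjoint action of $G \le \GL(V)$ on $\mathfrak{gl}(V) = \End(V)$ coincides, under the canonical identification $\End(V) \cong V \otimes V^*$, with the tensor product of the given representation on $V$ and its dual on $V^*$. This is standard but should be stated. Everything else is a direct chain of citations: $p > 2\dim V - 2 \Rightarrow p \ge \dim V \Rightarrow V$ semisimple (Theorem \ref{thm:jantzen}) $\Rightarrow V \otimes V^*$ semisimple (Theorem \ref{thm:serre}) $\Rightarrow \Lie G$ splits off $\Lie(\GL(V))$.
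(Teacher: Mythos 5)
Your proposal is correct and follows exactly the same route as the paper's proof: deduce $p \ge \dim V$, apply Jantzen's Theorem \ref{thm:jantzen} to get $V$ semisimple, apply Serre's Theorem \ref{thm:serre} to get $V \otimes V^* \cong \Lie(\GL(V))$ semisimple, and conclude that $\Lie\rho(G)$ splits off as a direct summand. The only differences are cosmetic --- you make explicit the harmless reduction to the faithful case and the identification of the adjoint action on $\End(V)$ with the natural action on $V \otimes V^*$, both of which the paper takes as understood.
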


\begin{proof}
Since $p > 2\dim V-2$, we also have $p \ge \dim V$. 
Thus $V$ is semisimple, by
Theorem \ref{thm:jantzen}.
Thanks to Theorem \ref{thm:serre}, 
$V \otimes V^* \cong \Lie(\GL(V))$ is also semisimple.
Consequently, $\Lie \rho(G)$ is a direct $\rho(G)$-module 
summand of  $\Lie(\GL(V))$. 
\end{proof}

\begin{rem}
\label{rem:sharp-bound}
The bound in Theorem \ref{thm:red-pair} is sharp. 
For, let $p = 2$, let $G = \SL_2$, and let $V$ be the 
natural module for $G$.
Since $G$ is not separable in itself, Proposition \ref{prop:red-pair-gl-sep}
implies that $(\GL(V), G)$ is not a reductive pair.
In fact, although Theorem \ref{thm:red-pair} asserts that generically every 
representation $V$  of $G$ gives rise to a reductive pair $(\GL(V),\rho(G))$,
this is \emph{never} the case if $p$ is bad for $G$ and $V$ is non-degenerate, 
cf.\ Remark \ref{rem:separable}.
\end{rem}

The following is our variant of Theorem \ref{thm:nv}:

\begin{thm}
\label{thm:ss-vs-cr}
Let $H$ be a closed subgroup of $G$ and let $V$ be a $G$-module.
\begin{itemize}
\item[(i)] 
Suppose that $p \ge \dim V$ and that $(H : H^\circ)$ is prime to $p$.
If $H$ is $G$-completely reducible, then $V$ is a semisimple $H$-module. 
\item[(ii)]
Suppose that $V$ is non-degenerate and $p > 2\dim V-2$.
If $V$ is semisimple as an $H$-module, then 
$H$ is $G$-completely reducible. 
\end{itemize}
\end{thm}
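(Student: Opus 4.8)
For part~(i), the plan is to reduce to the $\GL(V)$ situation and use saturation. Since $p \ge \dim V$, every nontrivial unipotent element of $\GL(V)$ has order $p$, so Serre's notion of saturation in $\GL(V)$ is available (Definition~\ref{def:saturation}). Write $G' = \rho(G) \le \GL(V)$ and $H' = \rho(H)$. If $H$ is $G$-completely reducible, then $H'$ is $G'$-completely reducible, and a standard argument shows $H'$ is then $\GL(V)$-completely reducible as well, i.e.\ $V$ is a semisimple $H'$-module---provided one knows this; but in fact what we want is cheaper. First I would pass to $(H')^{\sat}$, the saturated closure of $H'$ in $\GL(V)$. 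By Lemma~\ref{lem:saturation-index}, $((H')^{\sat} : ((H')^{\sat})^\circ)$ is prime to $p$. The key point is that one can run a saturation argument inside $G'$ (or rather, use the hypotheses to get a reductive pair) so that the ambient structure is controlled; then apply Theorem~\ref{thm:jantzen} (Jantzen's semisimplicity theorem, which needs exactly $p \ge \dim V$) in a suitable form, or combine Lemma~\ref{lem:saturation} with a reductive-pair argument. The cleanest route: show that $G$-complete reducibility of $H$ together with the index hypothesis forces $H^\circ$ to be reductive (Property~4 of \cite{serre1} gives reductivity of $H^\circ$ from $G$-cr alone), and then the semisimplicity of $V$ as an $H$-module should follow from Jantzen's theorem applied to $H$ once we know $V$ restricted to $H$ behaves well---but Jantzen's theorem is a statement about arbitrary connected reductive groups, so we need $H^\circ$ reductive and the index prime to $p$ to lift semisimplicity from $H^\circ$ to $H$. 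That lifting is a routine averaging/Clifford-theory argument since $(H:H^\circ)$ is invertible.

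For part~(ii), the strategy is to invoke Theorem~\ref{thm:red-pair}: since $p > 2\dim V - 2$, the pair $(\GL(V), \rho(G))$ is a reductive pair. Then Proposition~\ref{prop:red-pair-gl-cr} applies directly: if $V$ is a semisimple $H$-module, then $H$ is $G$-completely reducible. The only subtlety is the non-degeneracy hypothesis on $V$: if $\rho$ is not faithful, one must check that $G$-complete reducibility of $\rho(H)$ in $\rho(G)$ descends to $G$-complete reducibility of $H$ in $G$. Since $(\ker\rho)^\circ$ is a torus (non-degeneracy), the kernel is contained in every maximal torus, hence in every Levi and every parabolic, and standard results (e.g.\ \cite[Lem.~2.12 or similar]{BMR}) say that $G$-complete reducibility is insensitive to quotienting by a central torus; I would cite the relevant lemma from \cite{BMR} here.

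The main obstacle is part~(i): packaging the saturation machinery and Jantzen's theorem so that the proof is genuinely short and uniform, rather than re-deriving pieces of Serre's original argument. The delicate point is that Jantzen's theorem is about connected reductive groups acting on modules, whereas here we have a possibly disconnected $H$ that is merely $G$-cr; bridging this requires knowing $H^\circ$ is reductive (available from $G$-cr), checking that $V$ is a semisimple $H^\circ$-module via Jantzen once we control $\dim V$ relative to $p$ for $H^\circ$ (automatic, since $\dim V$ doesn't change), and then using $(H:H^\circ)$ prime to $p$ to promote $H^\circ$-semisimplicity to $H$-semisimplicity. I expect the write-up to hinge on getting this last promotion cleanly, and on correctly citing the descent-through-central-torus result for the non-faithful case.
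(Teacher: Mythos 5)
Your proposal lands on essentially the same proof as the paper. For part~(i) the paper's argument is exactly what you call ``the cleanest route'': $H$ being $G$-cr forces $H^\circ$ to be reductive (by Property~4 of \cite{serre1}), Jantzen's Theorem~\ref{thm:jantzen} applied to the connected reductive group $H^\circ$ gives $H^\circ$-semisimplicity since $p \ge \dim V$, and the index hypothesis $(H:H^\circ)$ prime to $p$ promotes this to $H$-semisimplicity --- the paper cites \cite[Lem.~3.1]{jantzen0} for this last step, which is precisely the ``routine averaging/Clifford-theory'' lifting you describe. The saturation detour you sketch in the first half of your part~(i) discussion is not used and contains a slip: $G'$-complete reducibility of $H'$ does \emph{not} imply $\GL(V)$-complete reducibility of $H'$ in general (that implication, under suitable hypotheses, is exactly what part~(i) is proving), so the ``standard argument'' you gesture at there does not exist; you correctly abandon that line before it matters. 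For part~(ii), you have the proof exactly: Theorem~\ref{thm:red-pair} gives the reductive pair $(\GL(V),\rho(G))$, Proposition~\ref{prop:red-pair-gl-cr} gives that $\rho(H)$ is $\rho(G)$-cr, and the descent to $H$ being $G$-cr uses non-degeneracy; the paper's citation for that descent is \cite[Lem.~2.12(ii)(b)]{BMR}.
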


\begin{proof}
First suppose as in (i). 
Since $H$ is $G$-cr, $H^\circ$ is reductive, by \cite[Property 4]{serre1}.
Since $p \ge \dim V$ and $(H : H^\circ)$ is prime to $p$, 
it follows from Theorem \ref{thm:jantzen}  (applied to $H^\circ$) and 
\cite[Lem.\ 3.1]{jantzen0} that $V$ is a semisimple $H$-module.

Next suppose as in (ii).
Since $(\GL(V),\rho(G))$ is a reductive pair, 
by Theorem \ref{thm:red-pair}, and $V$ is a semisimple $H$-module, 
it follows from  
Proposition \ref{prop:red-pair-gl-cr} that $\rho(H)$ is $\rho(G)$-cr.
Since $V$ is non-degenerate, \cite[Lem.\ 2.12(ii)(b)]{BMR} 
implies  that $H$ is $G$-cr.
\end{proof}

\begin{rem}
\label{rem:LieG}
Note that Theorem \ref{thm:nv}(ii) (and thus Theorem \ref{thm:ss-vs-cr}(ii))
holds  in particular cases for considerably weaker bounds.
For instance, in \cite[Thm.\ 1.7]{BMRT}, 
we showed that if $p$ is very good for $G$ and
$H$ acts semisimply on the adjoint module $\Lie G$, then $H$ is $G$-cr.
The reverse implication fails under this bound, 
cf.\ \cite[Rem.\ 3.43(iii)]{BMR}.
Here 
$n(\Lie G) = 2 h - 2$, where $h$ is the Coxeter number of $G$,
cf.\ \cite[Cor.\ 5.5]{serre2}.
\end{rem}

Here is our variation of Theorem \ref{thm:ag}:

\begin{thm}
\label{thm:red-vs-cr}
Suppose that $p > 2\dim V-2$ for a non-degenerate $G$-module $V$ and that 
$(H:H^\circ)$ is prime to $p$.
Then $H^\circ$ is reductive  if and only if  
$H$ is $G$-completely reducible.
\end{thm}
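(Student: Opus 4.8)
The plan is to prove both implications using the machinery already assembled. The forward direction, that $G$-cr implies $H^\circ$ reductive, is immediate from \cite[Property 4]{serre1} and requires no hypotheses on $p$ beyond those in force. So the work is entirely in the converse: assuming $H^\circ$ is reductive and $(H:H^\circ)$ is prime to $p$, we want $H$ to be $G$-cr.

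First I would reduce the problem to a statement about semisimplicity of $V$ as an $H$-module, since Theorem~\ref{thm:ss-vs-cr}(ii) then closes the argument: under the standing bound $p > 2\dim V - 2$ and non-degeneracy of $V$, if $V$ is a semisimple $H$-module then $H$ is $G$-cr. Thus it suffices to show that $H^\circ$ reductive and $(H:H^\circ)$ prime to $p$ force $V|_H$ to be semisimple. For this, restrict $V$ to $H^\circ$: since $p > 2\dim V - 2 \ge \dim V$ and $H^\circ$ is (connected) reductive, Jantzen's Theorem~\ref{thm:jantzen} applied to $H^\circ$ gives that $V$ is a semisimple $H^\circ$-module. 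Now pass from $H^\circ$ to $H$ using that the index is prime to $p$: by \cite[Lem.\ 3.1]{jantzen0} (the Clifford-theory argument already invoked in the proof of Theorem~\ref{thm:ss-vs-cr}(i)), semisimplicity of $V$ over $H^\circ$ together with $(H:H^\circ)$ prime to $p$ yields semisimplicity of $V$ over $H$.

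Having $V$ semisimple as an $H$-module, I would then simply quote Theorem~\ref{thm:ss-vs-cr}(ii): the hypotheses $p > 2\dim V - 2$ and $V$ non-degenerate are exactly those in its statement, so $H$ is $G$-completely reducible, completing the converse and hence the equivalence.

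I do not anticipate a serious obstacle here: every ingredient — Theorem~\ref{thm:jantzen}, \cite[Lem.\ 3.1]{jantzen0}, Theorem~\ref{thm:red-pair} (via Theorem~\ref{thm:ss-vs-cr}(ii)), and \cite[Property 4]{serre1} — is already available, and the argument is essentially a concatenation of the two halves of Theorem~\ref{thm:ss-vs-cr} with Jantzen's index lemma as the bridge. If anything is delicate, it is only bookkeeping: one must check that the bound $p > 2\dim V - 2$ genuinely entails $p \ge \dim V$ so that Theorem~\ref{thm:jantzen} applies to $H^\circ$, and that the non-degeneracy hypothesis on $V$ (needed to descend $\rho(G)$-complete reducibility of $\rho(H)$ to $G$-complete reducibility of $H$) is preserved throughout — but both are trivially satisfied under the stated assumptions.
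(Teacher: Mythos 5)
Your proposal is correct and follows essentially the same route as the paper: apply Jantzen's semisimplicity theorem to $H^\circ$, use \cite[Lem.\ 3.1]{jantzen0} and the index hypothesis to pass to $H$, and then invoke Theorem~\ref{thm:ss-vs-cr}(ii), with the reverse implication coming from \cite[Property 4]{serre1}. There is no meaningful difference from the published argument.
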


\begin{proof}
First suppose that $H^\circ$ is reductive.
Since $p > 2\dim V-2$, we also have $p \ge \dim V$. 
Thus $V$ is a semisimple $H^\circ$-module, by
Theorem \ref{thm:jantzen} (applied to $H^\circ$).
Moreover, since $(H:H^\circ)$ is prime to $p$, it follows from 
\cite[Lem.\ 3.1]{jantzen0}
that $V$ is a semisimple $H$-module.
The result now follows from Theorem \ref{thm:ss-vs-cr}(ii). 

The reverse implication is immediate, by 
\cite[Property 4]{serre1}. 
\end{proof}

Clearly, our bound in Theorem \ref{thm:red-vs-cr} 
is much worse than the bound $a(G)$ 
in Theorem~\ref{thm:ag}.

\begin{rem}
\label{rem:separable}
It follows from Theorem \ref{thm:red-pair} 
and Proposition \ref{prop:red-pair-gl-sep}
that only separable subgroups of 
$G$ are captured in Theorems \ref{thm:ss-vs-cr}(ii) and  \ref{thm:red-vs-cr}.
In \cite[Ex.\ 4.2]{herpel}, the second author showed that 
if $p$ is bad for $G$, then there always exists 
a non-separable subgroup of $G$
(likewise if $G$ is simple and $p$ is not very good for $G$). 
Consequently, in this case $(\GL(V),\rho(G))$ can't be a reductive pair 
for \emph{any} non-degenerate rational $G$-module $V$, 
by Proposition \ref{prop:red-pair-gl-sep}, cf.\ \cite[Rem.\ 4.3]{herpel}. 
Each of the non-separable subgroups constructed in \cite[Ex.\ 4.2]{herpel}
is a regular reductive subgroup of $G$ and hence is $G$-cr,
thanks to \cite[Prop.\ 3.20]{BMR}. 
\end{rem}

\section{Proof of Theorem \ref{thm:jantzen-cr-subgps}}

We now come to our generalization of 
Jantzen's semisimplicity  Theorem \ref{thm:jantzen} to $G$-cr subgroups
of $G$; Theorem \ref{thm:jantzen}
is the special  case of Theorem \ref{thm:jantzen-cr-subgps} when $H = G$.

\begin{proof}[Proof of Theorem \ref{thm:jantzen-cr-subgps}]
Suppose that $p\geq \dim V$ and $H$ is $G$-cr. 
By Theorem \ref{thm:jantzen}, $V$ is a semisimple $G$-module. 
So to show that $V$
is also semisimple as an $H$-module, we may assume that $V = L(\lambda)$ is simple
of highest weight $\lambda = \lambda_0 + p \lambda_1 + \dots + p^r \lambda_r$, with
restricted weights $\lambda_i$.
Set $L_i := L(p^i\lambda_i) \cong L(\lambda_i)^{[i]}$, 
the $i$th \emph{Frobenius twist} of $L(\lambda_i)$.
Then  
$V = L_0 \otimes L_1 \otimes \cdots \otimes L_r$, 
by Steinberg's tensor product theorem.
Since $p\geq \dim V$, we also have $p\geq \dim L_i = \dim L(\lambda_i)$ for each $i$, so that 
$p > n(\lambda_i) = n(L(\lambda_i))$ for each $i$, 
according to \cite[Lem.\ 1.2]{jantzen0}.
By Theorem \ref{thm:nv}(i), 
each $L(\lambda_i)$ is a  semisimple $H$-module and hence
so is each Frobenius twist $L_i$. 
Moreover, $p\geq \dim V$ also implies that $p > \sum_i (\dim L_i -1)$.
We therefore may apply Serre's tensor product theorem 
\cite[Thm.\ 1]{serre0} to deduce
that $V$ is a semisimple $H$-module.
\end{proof}

Note that Theorem \ref{thm:jantzen-cr-subgps} 
shows that Theorem \ref{thm:ss-vs-cr}(i) is valid even without the restriction on 
the index of $H^\circ$ in $H$.
However, the proof of Theorem \ref{thm:jantzen-cr-subgps}  requires the full force of Theorem  \ref{thm:nv}(i), whereas our proof of Theorem \ref{thm:ss-vs-cr}(i) does not, so this is of independent interest.

\begin{rem}
Proposition \ref{prop:red-pair-gl-cr} asserts that
under the assumption that $(\GL(V),G)$ is a reductive pair,
$H$ is $G$-cr provided $V$ is a semisimple $H$-module.
In Theorem \ref{thm:jantzen-cr-subgps}
we prove the reverse implication under the assumption that $p \ge \dim V$.

Even under the seemingly stronger condition 
that $(\GL(V),G)$ is a reductive pair and $V$ is a semisimple $G$-module, the statement of 
Theorem \ref{thm:jantzen-cr-subgps} is false without the restriction on $p$.
Such an example is already known 
thanks to a construction from unpublished work of Serre, 
cf.\ \cite[Ex.~4.7]{BMRT}.
We now give a different example:
Let $p=3$, $q=9$ and let $G= \SL_2$.  Set $H = G(q)$.
Clearly, $H$ is $G$-cr. 
The simple $G$-module $V = L(1+q+q^2)$  
is isomorphic to $L(1)\otimes L(1)^{[1]}\otimes L(1)^{[2]}$ ,
by Steinberg's tensor product theorem, 
where the superscripts denote $q$-twists.
Then $\dim V = 8 > p$.
One readily checks that $V \otimes V^*$ is a semisimple $G$-module, so that
$(\GL(V),G)$ is a reductive pair. 
However, as a $G(q)$-module, $V$ is isomorphic to the $G$-module
$L(1)\otimes L(1) \otimes L(1)$ which 
admits the non-simple indecomposable Weyl module of highest weight $3$ 
as a constituent, and the latter is not semisimple for $G(q)$, 
e.g., see \cite[(2D)]{Wong}.
Consequently, $V$ is not semisimple as a  $G(q)$-module. 
\end{rem}

It follows from \cite[Lem.\ 2.12(ii)(a)]{BMR} that if $H$ is $G$-cr, then $\rho(H)$ is $\rho(G)$-cr.
The following result shows that the same holds
for the saturation  
 $\rho(G)^{\sat}$ of the image of $G$ in $\GL(V)$.

\begin{cor}
\label{cor:Hsat}
Suppose that $p \ge \dim V$.
Then $\rho(G)^{\sat}$ is connected and reductive.
If $H$ is $G$-completely reducible, then 
$\rho(H)^{\sat}$ is $\rho(G)^{\sat}$-completely reducible.
\end{cor}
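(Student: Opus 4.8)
The plan is to treat the two assertions separately, dealing first with the structure of $\rho(G)^{\sat}$ and then with the complete reducibility statement. For the first assertion, I would argue that since $p\ge\dim V$, Theorem \ref{thm:jantzen} gives that $V$ is a semisimple $G$-module, hence a semisimple $\rho(G)$-module. By Lemma \ref{lem:saturation} (applied with $H=\rho(G)$), $V$ is therefore semisimple as a $\rho(G)^{\sat}$-module. Now $\rho(G)^{\sat}$ is a subgroup of $\GL(V)$ acting semisimply on $V$, so it is reductive: more precisely, its unipotent radical acts trivially on every composition factor of a semisimple module and hence acts trivially on $V$, but $\rho(G)^{\sat}\le\GL(V)$ acts faithfully, so the unipotent radical is trivial. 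For connectedness, I would use that $\rho(G)$ is connected together with the fact (from Serre's construction, \cite{serre1}) that saturating a connected subgroup of $\GL(V)$ keeps it connected — the saturated closure is generated by $\rho(G)$ together with the one-parameter subgroups $\{u^t\mid t\in\mathbb G_a\}$ for unipotent $u$, each of which is a connected subgroup containing $u=u^1$, so adjoining them to the connected group $\rho(G)$ cannot create new components. (If one prefers to avoid this, one can instead observe that $\rho(G)^{\sat,\circ}$ is already saturated, being generated by connected saturated pieces, and contains $\rho(G)$, so equals $\rho(G)^{\sat}$.)

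For the second assertion, suppose $H$ is $G$-cr. The key input is Theorem \ref{thm:jantzen-cr-subgps}: since $p\ge\dim V$ and $H$ is $G$-cr, $V$ is a semisimple $H$-module, hence a semisimple $\rho(H)$-module. Now I would want to bootstrap this to complete reducibility relative to the reductive group $\rho(G)^{\sat}$. The natural route is via Lemma \ref{lem:saturation} again: $V$ semisimple as an $\rho(H)$-module implies $V$ is semisimple as an $\rho(H)^{\sat}$-module. Since $p\ge\dim V$, Lemma \ref{lem:saturation-index} gives that $(\rho(H)^{\sat}:\rho(H)^{\sat,\circ})$ is prime to $p$. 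Now $\rho(H)^{\sat}$ is a subgroup of the connected reductive group $\rho(G)^{\sat}$ (here one needs $\rho(H)^{\sat}\le\rho(G)^{\sat}$, which is clear since $\rho(G)^{\sat}$ is a saturated subgroup of $\GL(V)$ containing $\rho(H)$, hence contains the smallest such group $\rho(H)^{\sat}$), and it acts semisimply on $V$. I would then invoke Theorem \ref{thm:ss-vs-cr}(ii), applied with the ambient group $\rho(G)^{\sat}$ in place of $G$ and $\rho(H)^{\sat}$ in place of $H$: provided $V$ is non-degenerate as a $\rho(G)^{\sat}$-module and $p>2\dim V-2$, semisimplicity of $V$ as a $\rho(H)^{\sat}$-module yields that $\rho(H)^{\sat}$ is $\rho(G)^{\sat}$-cr.

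The main obstacle is that Theorem \ref{thm:ss-vs-cr}(ii) requires the bound $p>2\dim V-2$, whereas the corollary only assumes $p\ge\dim V$; so a direct appeal to that theorem is not available and a different argument for the $G$-cr conclusion is needed under the weaker hypothesis. The right tool here is instead \cite[Lem.\ 2.12]{BMR}-style transfer of complete reducibility along the inclusion $\rho(G)^{\sat}\le\GL(V)$: since $V$ is a faithful semisimple $\rho(H)^{\sat}$-module, $\rho(H)^{\sat}$ is $\GL(V)$-cr; because parabolic and Levi subgroups of $\GL(V)$ are saturated and $\rho(G)^{\sat}$ is saturated, any parabolic of $\rho(G)^{\sat}$ containing $\rho(H)^{\sat}$ extends (via Serre's correspondence between parabolics of a saturated subgroup and those of $\GL(V)$, \cite[\S 5]{serre1}, \cite[Thm.\ 5.3]{serre2}) to a parabolic of $\GL(V)$, and $\GL(V)$-complete reducibility then produces the required Levi; so $\rho(H)^{\sat}$ is $\rho(G)^{\sat}$-cr. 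Carefully checking that this parabolic/Levi correspondence behaves well — i.e. that a Levi of $\GL(V)$ meets $\rho(G)^{\sat}$ in a Levi of the corresponding parabolic of $\rho(G)^{\sat}$ — is the one genuinely delicate point, and it is exactly the content of the saturation machinery in \cite{serre1}; everything else is a routine chain of the semisimplicity-preservation lemmas already quoted.
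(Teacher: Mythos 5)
The first assertion (connectedness and reductivity of $\rho(G)^{\sat}$) is handled essentially as in the paper: Jantzen plus Lemma \ref{lem:saturation} shows $V$ is a semisimple $\rho(G)^{\sat}$-module, whence $\rho(G)^{\sat}$ is $\GL(V)$-cr and its identity component is reductive; and the generation-by-connected-pieces argument for connectedness is the same idea (the paper invokes \cite[Cor.\ 2.2.7]{spr2} for the fact that a group generated by a connected family of connected subgroups is connected).

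For the second assertion, you correctly diagnose the obstacle — Theorem \ref{thm:ss-vs-cr}(ii) requires $p>2\dim V-2$, which is not available — but you miss the device the paper actually uses to get around it. The point is that Serre's invariant $n(\cdot)$ drops upon passing to the saturation: by \cite[Cor.\ 1]{serre1} one has $n_{\rho(G)^{\sat}}(V)\le n_{\GL(V)}(V)=\dim V-1<p$. With that inequality in hand, one applies Theorem \ref{thm:nv}(ii) (Serre's theorem with the sharp $n(V)$-bound) to the pair $\rho(H)^{\sat}\le\rho(G)^{\sat}$ rather than the weaker Theorem \ref{thm:ss-vs-cr}(ii), and the $G$-cr conclusion follows immediately. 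Your fallback route — a direct parabolic/Levi transfer from $\GL(V)$-complete reducibility of $\rho(H)^{\sat}$ down to $\rho(G)^{\sat}$-complete reducibility — is not just a ``delicate point'' deferrable to the saturation machinery: passing from $G$-cr to $M$-cr for a reductive subgroup $M\le G$ is precisely the step that is false in general and in this paper is handled only under a reductive-pair hypothesis (Proposition \ref{prop:red-pair-gl-cr}), which is unavailable here. Even if parabolics of $\rho(G)^{\sat}$ extend to parabolics of $\GL(V)$, there is no a priori reason the Levi of $\GL(V)$ produced by $\GL(V)$-complete reducibility should intersect $\rho(G)^{\sat}$ in a Levi of the original parabolic containing $\rho(H)^{\sat}$. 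So as written, the second half of your proposal has a genuine gap; the ingredient you need is \cite[Cor.\ 1]{serre1} combined with Theorem \ref{thm:nv}(ii).
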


\begin{proof}
By Theorem \ref{thm:jantzen}, $V$ is a semisimple $G$-module. 
Lemma \ref{lem:saturation} 
then shows that 
$\rho(G)^{\sat}$ is $\GL(V)$-cr and thus 
$(\rho(G)^{\sat})^\circ$ is reductive, 
by \cite[Property 4]{serre1}.
Consider the subgroup $M$ of $\GL(V)$ generated by 
$\rho(G)$ and the closed connected subgroups
$\{\rho(u)^t \mid t \in  {\mathbb G}_a\} \cong {\mathbb G}_a$  of $\GL(V)$
for each unipotent element $u \in G$.
By definition, $M \le \rho(G)^{\sat}$.
If $M \ne \rho(G)^{\sat}$, then 
by  repeating this process with $M$ (possibly several times), we eventually generate all of 
$\rho(G)^{\sat}$ by $\rho(G)$ and closed connected subgroups of $\GL(V)$
isomorphic to ${\mathbb G}_a$. It thus 
follows from \cite[Cor.\ 2.2.7]{spr2} that $\rho(G)^{\sat}$ is connected.

Now suppose that $H$ is $G$-cr.
Theorem \ref{thm:jantzen-cr-subgps} then implies
that $V$ is a semisimple $H$-module and thus, by 
Lemma \ref{lem:saturation}, 
$V$ is also semisimple as a $\rho(H)^{\sat}$-module.
Thanks to \cite[Cor.\ 1]{serre1}, we have
$n_{\rho(G)^{\sat}}(V) \le n_{\GL(V)}(V) = \dim V -1 < p$.
It thus follows from Theorem \ref{thm:nv}(ii) 
that $\rho(H)^{\sat}$ is $\rho(G)^{\sat}$-cr, as desired.
\end{proof}

We note that a variation of  Corollary \ref{cor:Hsat} is valid for 
the general notion of saturation
replacing $\GL(V)$ with an arbitrary connected reductive group $G$.
We will  return to this in a future publication.

\begin{rem}
\label{rem:Hsat-converse}
If $p > 2 \dim V - 2$ and $V$ is non-degenerate, 
then also the converse of the final assertion of 
Corollary \ref{cor:Hsat} holds.
For, if $p > 2 \dim V - 2$, then $p \ge \dim V$. 
Thus $\rho(G)^{\sat}$ is connected and reductive, by the first part of
Corollary \ref{cor:Hsat}.
Now suppose that $\rho(H)^{\sat}$ is $\rho(G)^{\sat}$-cr.
It then follows from Lemma \ref{lem:saturation-index} and 
Theorem \ref{thm:ss-vs-cr}(i) that $V$ is a semisimple 
$\rho(H)^{\sat}$-module, and thus $V$ is a semisimple $H$-module,
by Lemma \ref{lem:saturation}.
The result now follows from Theorem \ref{thm:ss-vs-cr}(ii). 
\end{rem}

\begin{rem}
\label{rem:nv-vs-dim}
We compare the bounds $p > n(V)$ from  Theorem \ref{thm:nv}(i) and 
$p\geq \dim V$ 
from Theorem \ref{thm:jantzen-cr-subgps}.
Let $L(\mu_1),\dots,L(\mu_m)$ be the non-isomorphic simple factors
of a composition series of $V$. 

First, suppose that all $\mu_j$ are restricted.
Then $p>n(L(\mu_j))$
for each $j$, by \cite[Lem.\ 1.2]{jantzen0}.
In particular, $p> \sup\{n(L(\mu_j))\} = n(V)$, by
\cite[\S 5.2]{serre2}, so that Serre's bound applies.
In general,  $\dim V$  
is considerably larger  than $n(V)$ in this
situation. For instance, let $G$ be simple of type $E_6$ and let 
$V = L(\omega_1)$ be the  simple $G$-module of
highest weight $\omega_1$, the first fundamental dominant weight.
Here we have $n(V) = 16$, while 
$ \dim V = 27$.

Next assume that one of the $\mu_j$ is not restricted, i.e.,\  say
$\mu_j = \lambda_0 + p \lambda_1 + \dots + p^r \lambda_r$, with
restricted weights $\lambda_i$ and at least one $\lambda_i \neq 0$, ($i>0$).
According to \cite[\S 5.2]{serre2}, we find that 
$n(V) \ge n(\mu_j) = n(\lambda_0) + pn(\lambda_1)+\dots+p^rn(\lambda_r) \ge p$,
so the bound $p > n(V)$ does not apply. 

Now suppose  in addition that $p \ge \dim V$ and $H$ is $G$-cr. 
Then Theorem \ref{thm:jantzen-cr-subgps}
shows that $V$ is a semisimple $H$-module.
We can also argue as in the proof of Corollary \ref{cor:Hsat}:
Since  $p \ge \dim V$, we can saturate the image of $G$ 
in $\GL(V)$. Then, 
because $H$ is $G$-cr, it follows from  Corollary \ref{cor:Hsat} that
$\rho(H)^{\sat}$ is $\rho(G)^{\sat}$-cr.
Thanks to Lemma \ref{lem:saturation-index} and 
Theorem \ref{thm:ss-vs-cr}(i), we see  that $V$ is a semisimple 
$\rho(H)^{\sat}$-module, and thus $V$ is a semisimple $H$-module,
by Lemma \ref{lem:saturation}.
In place of Lemma \ref{lem:saturation-index} and 
Theorem \ref{thm:ss-vs-cr}(i), we can use 
Theorem \ref{thm:nv}(i) directly: 
\cite[Cor.\ 1]{serre1} implies that
$n_{\rho(G)^{\sat}}(V) \le n_{\GL(V)}(V) = \dim V -1 < p$,
so that Serre's condition is satisfied for $\rho(G)^{\sat}$
even though it is not satisfied for $G$ itself.
\end{rem}

\section{Proof of Theorem \ref{thm:sufficient-cr}}
\label{sec:proof13}
Let $H \le K$ be closed subgroups of $G$.
The normalizer of $K$ in $G$ is denoted by $N_G(K)$.
By $C_G(H) = \{g \in G \mid gxg\inverse = x \ \forall x \in H\}$ 
and $C_K(H) = C_G(H) \cap K$ we denote the centralizer of $H$ in $G$
and the centralizer of $H$ in $K$, respectively.
Analogously, 
we denote the  centralizer 
of $H$ in $\gg = \Lie G$ by 
$\cc_\gg(H) =\{y \in \gg \mid \Ad(x)y = y \ \forall x \in H\}$ 
and the centralizer of $H$ in $\kk = \Lie K$ by 
$\cc_\kk(H) = \cc_\gg(H) \cap \kk$, respectively.

Given $n\in \mathbb{N}$, we let $G$ act diagonally on $G^n$ by simultaneous conjugation:
\[
g\cdot (g_1,g_2,\ldots, g_n) = (gg_1g^{-1},gg_2g^{-1},\ldots,
gg_ng^{-1}).
\]

We require the notion of a generic tuple, \cite[Def.\ 5.4]{GIT}.
Let $G\hookrightarrow\GL_m$
be an embedding of algebraic groups.
Then $\tuple{h} = (h_1, \ldots, h_n) \in H^n$ is called a
\emph{generic tuple of $H$ for the embedding $G\hookrightarrow\GL_m$}
if the $h_i$ 
generate the associative subalgebra of $\Mat_m = \mathfrak{gl}_m = \Lie \GL_m$
spanned by $H$. We call $\tuple{h}\in H^n$ a \emph{generic tuple of $H$}
if it is a generic tuple of $H$ for some embedding $G\hookrightarrow\GL_m$.
Generic tuples exist for any embedding $G\hookrightarrow\GL_m$ if
$n$ is sufficiently large. 
The relevance to $G$-cr subgroups of this notion is as follows:
For $\tuple{h}\in H^n$ a generic tuple of $H$, 
\cite[Thm.\ 5.8]{GIT} asserts that the orbit $G \cdot \tuple{h}$ is closed in 
$G^n$ if and only if $H$ is $G$-cr.

Our first  result in this section generalizes  \cite[Rem.\ 3.31]{BMR}. 

\begin{lem}
\label{lem:sep}
Let $H \le K \le G$ be closed subgroups of $G$. 
Let $\tuple{h}$ be a generic tuple for $H$.
Then the orbit map $K \rightarrow K \cdot \tuple{h}$ is separable if and only if $H$ 
is separable in $K$.
\end{lem}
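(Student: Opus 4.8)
The plan is to compute the differential of the orbit map $K \to K\cdot\tuple{h}$ at the identity and identify its kernel with the Lie algebra of the scheme-theoretic centralizer $C_K(H)$, so that separability of the orbit map becomes equivalent to the equality $\dim \cc_\kk(H) = \dim C_K(H)$, which is precisely the definition of $H$ being separable in $K$. Concretely, writing $\tuple{h} = (h_1,\dots,h_n) \in H^n \subseteq K^n$ and letting $\psi\colon K \to K\cdot\tuple{h}$ denote the orbit map $k \mapsto k\cdot\tuple{h} = (kh_1k\inverse,\dots,kh_nk\inverse)$, the first step is to recall that $\psi$ is separable if and only if $d\psi_e$ is surjective onto the tangent space of the orbit $K\cdot\tuple{h}$ at $\tuple{h}$, equivalently (since the orbit is a homogeneous space) if and only if $\dim \ker d\psi_e = \dim C_K(H)$, because $\dim K = \dim K\cdot\tuple{h} + \dim C_K(\tuple{h})$ on the level of varieties, and $C_K(\tuple{h}) = C_K(H)$ as $\tuple{h}$ is generic.

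The key computation is then to show $\ker d\psi_e = \cc_\kk(H)$. First I would identify $T_{\tuple{h}}(K^n)$ with $(\Lie K)^n$ via right translation in each coordinate, and compute that $d\psi_e(x) = \bigl((\Ad(h_1) - 1)(-x),\dots,(\Ad(h_n)-1)(-x)\bigr)$ up to the usual sign conventions, i.e.\ $d\psi_e(x)$ vanishes if and only if $\Ad(h_i)x = x$ for all $i$. Since $\tuple{h}$ is a generic tuple, the $h_i$ topologically generate $H$ in the sense relevant here — more precisely, the associative subalgebra they span equals the one spanned by $H$, so the condition $\Ad(h_i)x = x$ for all $i$ is equivalent to $\Ad(h)x = x$ for all $h \in H$ — and hence $\ker d\psi_e = \{x \in \kk \mid \Ad(h)x = x\ \forall h \in H\} = \cc_\kk(H)$. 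Combining, $\psi$ is separable $\iff \dim \cc_\kk(H) = \dim C_K(H) \iff H$ is separable in $K$.

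The main obstacle I anticipate is the careful bookkeeping in the genericity step: one must justify that $\Ad(h_i)x = x$ for the finitely many $h_i$ forces $\Ad$-fixedness under all of $H$. The cleanest route is to observe that $\rho(H)$ and the $\rho(h_i)$ span the same associative subalgebra $A$ of $\Mat_m$ (for the fixed embedding $G \hookrightarrow \GL_m$ used to define the generic tuple), that $\Ad$ is the restriction to $\Lie G$ of the conjugation action of $\GL_m$ on $\Mat_m$, and that the fixed points of conjugation by a set $S \subseteq \GL_m$ on $\Mat_m$ depend only on the associative subalgebra generated by $S$ together with the identity; this is a standard linear-algebra fact (the commutant of $A$ equals the commutant of any generating set of $A$). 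This mirrors exactly the argument that $C_K(\tuple{h}) = C_K(H)$ for a generic tuple, so one can cite \cite[Def.\ 5.4]{GIT} and the surrounding discussion from \cite{GIT} rather than reprove it. The scheme-theoretic/infinitesimal side — that $\ker d\psi_e$ really is the Lie algebra of the centralizer \emph{scheme} $C_K(H)$, not merely of the reduced centralizer — is automatic here because $\ker d\psi_e$ is by construction the tangent space at $e$ to the scheme-theoretic fiber $\psi\inverse(\tuple{h}) = C_K(\tuple{h})$, and for a generic tuple this fiber is the centralizer scheme of $H$ in $K$; so separability of $\psi$ at $e$ (equivalently, by homogeneity, everywhere) is literally the smoothness of $C_K(H)$, which is the definition of separability (cf.\ \cite[Def.\ 3.27]{BMR}).
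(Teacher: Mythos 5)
Your proof is correct and takes essentially the same approach as the paper: both reduce to the dimension count $\dim C_K(\tuple{h}) = \dim \cc_\kk(\tuple{h})$ via the differential of the orbit map, and both invoke genericity (\cite[Lem.\ 5.5(i)]{GIT}) to pass between centralizers of $\tuple{h}$ and of $H$ at both the group and Lie-algebra levels. Your extra explicitness about $\ker d\psi_e$ and the scheme-theoretic fiber makes the same argument slightly more transparent but is not a different route.
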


\begin{proof}
According to \cite[Lem.\ 5.5(i)]{GIT}, a generic tuple $\tuple{h}$ for $H$
satisfies the identity $C_K(H) = C_K(\tuple{h})$. 
By the same argument, we obtain
$\cc_\kk(H) = \cc_\kk(\tuple{h})$.

Let $\pi: K \rightarrow K \cdot \tuple{h}$ be the orbit map.
Then $\pi$ is separable if and 
only if $d_e \pi: \kk \rightarrow T_{\tuple{h}}(K \cdot \tuple{h})$ 
is surjective. Using 
$\dim T_{\tuple{h}}(K \cdot \tuple{h}) = \dim K\cdot \tuple{h} = \dim K - \dim C_K(\tuple{h})$ 
and $\dim \im(d_e \pi) = \dim \kk - \dim \cc_\kk(\tuple{h}) = \dim K - \dim 
\cc_\kk(\tuple{h})$, we find that the surjectivity of $d_e \pi$ is equivalent to the 
equality $\dim C_K(\tuple{h}) = \dim \cc_\kk(\tuple{h})$. By the first paragraph of 
the proof, this is equivalent to the separability of $H$ in $K$.
\end{proof}

The following generalizes part of \cite[Thm.\ 1.3]{BMRT} 
(which is the special case of Lemma \ref{lem:descend} 
when $(G,K)$ is a reductive pair and the $h_i$ lie in $K$).

\begin{lem} 
\label{lem:descend}
Let $K \le G$ be a closed subgroup. 
Let $\tuple{h}=(h_1,\dots, h_n) \in N_G(K)^n$.
Suppose that there is a decomposition $\gg = \kk \oplus \mm$ that is 
$\Ad_G(h_i)$-stable for $i=1,\dots,n$, 
and that the orbit map $\pi' : G \rightarrow G\cdot \tuple{h} \subseteq G^n$
is separable. 
Then the orbit map $\pi: K \rightarrow K\cdot \tuple{h}$ is separable. 
\end{lem}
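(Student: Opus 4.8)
The plan is to compare the orbit maps for $K$ and $G$ on the same tuple $\tuple{h}$ and extract separability of the smaller one from separability of the larger one, using the $\Ad$-stable complement $\mm$ to split off the extra directions. First I would set $\pi : K \to K\cdot\tuple{h}$ and $\pi' : G \to G\cdot\tuple{h}$, and note that by the argument of Lemma \ref{lem:sep} (or directly \cite[Lem.\ 5.5(i)]{GIT} applied in $K$ and in $G$) it suffices to prove $\dim C_K(\tuple{h}) = \dim \cc_\kk(\tuple{h})$, knowing the corresponding equality $\dim C_G(\tuple{h}) = \dim \cc_\gg(\tuple{h})$. Equivalently, I would phrase everything in terms of the differential: $\pi$ is separable iff $d_e\pi : \kk \to T_{\tuple{h}}(K\cdot\tuple{h})$ is surjective, and similarly for $\pi'$, and I want to descend surjectivity along the inclusion $\kk \hookrightarrow \gg$.

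The key point is that the tangent space $T_{\tuple{h}}(G\cdot\tuple{h}) \subseteq \bigoplus_{i=1}^n T_{h_i}G$ is the image of the map $\gg \to \bigoplus_i \gg$, $y \mapsto (y - \Ad(h_i)y)_i$ (after translating $T_{h_i}G$ to $\gg$), and likewise $T_{\tuple{h}}(K\cdot\tuple{h})$ is the image of the same formula restricted to $\kk$. Because $h_i \in N_G(K)$, the operator $1 - \Ad(h_i)$ preserves both $\kk$ and the complement $\mm$; so the decomposition $\gg = \kk \oplus \mm$ is respected by all these maps simultaneously. Writing $\phi = d_e\pi'$ as $\phi = \phi_\kk \oplus \phi_\mm$ with $\phi_\kk = d_e\pi$, the image of $\phi$ is $\im\phi_\kk \oplus \im\phi_\mm$. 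Now $\pi'$ separable means $\im\phi$ has dimension equal to $\dim G\cdot\tuple{h}$, which in turn equals $\dim\im(y\mapsto(y-\Ad(h_i)y))$ computed set-theoretically; the content is that $\dim\im\phi = \dim G - \dim C_G(\tuple h)$. I then need the matching statement that $\dim\ker\phi = \dim C_G(\tuple h)$ as a scheme, i.e. that $\ker\phi = \cc_\gg(\tuple h)$ and $C_G(\tuple h)$ has this as tangent space precisely when $\pi'$ is separable; combining with the direct-sum splitting gives $\ker\phi_\kk = \ker\phi \cap \kk = \cc_\gg(\tuple h)\cap\kk = \cc_\kk(\tuple h)$, and $\dim C_K(\tuple h) = \dim(C_G(\tuple h)\cap K) \le \dim C_G(\tuple h) - (\text{contribution in }\mm\text{-directions})$ — here I must be careful to show the inequality $\dim C_K(\tuple h) \le \dim\cc_\kk(\tuple h)$, the reverse always holding.

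Concretely I would argue: separability of $\pi'$ gives $\cc_\gg(\tuple h) = \Lie C_G(\tuple h)$, hence $\cc_\kk(\tuple h) = \cc_\gg(\tuple h)\cap\kk = \Lie C_G(\tuple h)\cap\kk \supseteq \Lie(C_G(\tuple h)\cap K) = \Lie C_K(\tuple h)$; since always $\dim\cc_\kk(\tuple h) \ge \dim\Lie C_K(\tuple h) \ge \dim C_K(\tuple h) \ge \dim\cc_\kk(\tuple h)$ — the last by smoothness failing only to increase Lie-algebra dimension and the fact that the reverse was the cheap direction — wait, this needs the $\mm$-splitting to force $\Lie C_G(\tuple h)\cap\kk$ to actually equal $\Lie C_K(\tuple h)$. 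That is exactly where $\Ad(h_i)$-stability of $\mm$ enters on the group level: one shows $C_G(\tuple h)$ has a tangent space splitting compatible with $\gg = \kk\oplus\mm$ so that no element of $C_G(\tuple h)$ "near" $\kk$ sneaks out into $\mm$-directions, giving $\dim C_K(\tuple h) = \dim\cc_\kk(\tuple h)$ and hence, by Lemma \ref{lem:sep}, separability of $\pi$.

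The main obstacle I anticipate is precisely this last bookkeeping: translating the clean Lie-algebra direct-sum decomposition $\im d_e\pi' = \im d_e\pi \oplus \im\phi_\mm$ into the statement that the scheme-theoretic centralizer $C_K(\tuple h)$ is smooth, i.e. that the defect $\dim C_K(\tuple h) - \dim\cc_\kk(\tuple h)$ (which is $\le 0$) is actually $0$. The dimension count $\dim K\cdot\tuple h = \dim K - \dim C_K(\tuple h)$ together with $\dim\im d_e\pi = \dim\kk - \dim\cc_\kk(\tuple h)$ reduces separability of $\pi$ to $\dim C_K(\tuple h) = \dim\cc_\kk(\tuple h)$; the non-formal input is that $\dim K\cdot\tuple h$ relates correctly to $\dim G\cdot\tuple h$ via the $\mm$-directions, and handling this cleanly — probably by the same Richardson-style argument as in \cite[Thm.\ 1.3]{BMRT}, now noting that $h_i \in N_G(K)$ rather than $h_i \in K$ is all that is needed for $\Ad(h_i)$ to preserve $\kk$ — is the crux. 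Everything else (the identities $C_K(H) = C_K(\tuple h)$, $\cc_\kk(H) = \cc_\kk(\tuple h)$, the surjectivity-of-differential criterion) is routine and already packaged in Lemma \ref{lem:sep} and \cite[Lem.\ 5.5]{GIT}.
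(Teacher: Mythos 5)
You have correctly identified the key structural observation — since each $h_i \in N_G(K)$, the operators $1 - \Ad(h_i)$ preserve both $\kk$ and $\mm$, so the differential of the orbit map for $G$ splits along $\gg = \kk \oplus \mm$. But the proposal does not close the argument, and you say so yourself. Two concrete problems arise. First, you assert that ``$T_{\tuple h}(K\cdot\tuple h)$ is the image of the same formula restricted to $\kk$''; however, the tangent space to an orbit equals the image of the differential of the orbit map \emph{only when that orbit map is separable}. For $G$ this is the hypothesis; for $K$ it is exactly the conclusion, so taking this for granted is circular. Second, the detour through centralizer dimensions gets stuck where you flag it: you need $\dim C_K(\tuple h) \geq \dim\cc_\kk(\tuple h)$ (equality being separability, via Lemma \ref{lem:sep}), and you cannot extract this directly from $\Lie C_G(\tuple h) \cap \kk \supseteq \Lie C_K(\tuple h)$ without further input.

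The paper's proof bypasses the centralizer bookkeeping entirely and argues purely at the tangent-space level. Given $y \in T_{\tuple h}(K\cdot\tuple h) \subseteq T_{\tuple h}(G\cdot\tuple h)$, use separability of $\pi'$ to lift $y$ to $z \in \gg$ with $d_e\pi'(z) = y$, and write $z = z_1 + z_2$ with $z_1 \in \kk$, $z_2 \in \mm$. Translating by the automorphism $\mu(g_1,\dots,g_n) = (g_1h_1^{-1},\dots,g_nh_n^{-1})$ of $G^n$, the vector $d_{\tuple h}\mu(y)$ lies in $\kk^n$ because $\mu$ carries $K\cdot\tuple h$ into $K^n$ (here $h_i \in N_G(K)$ is used), and $d_e(\mu\circ\pi')(z_1) \in \kk^n$ as well, whereas $d_e(\mu\circ\pi')(z_2) = \bigl((1-\Ad h_1)(z_2),\dots,(1-\Ad h_n)(z_2)\bigr) \in \mm^n$ by $\Ad(h_i)$-stability. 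Since $\kk^n \cap \mm^n = 0$, this forces $d_e(\mu\circ\pi')(z_2) = 0$, hence $d_{\tuple h}\mu(y) = d_e(\mu\circ\pi')(z_1)$, and applying $\mu^{-1}$ gives $y = d_e\pi'(z_1) = d_e\pi(z_1)$. This shows $d_e\pi$ surjects onto $T_{\tuple h}(K\cdot\tuple h)$, i.e.\ $\pi$ is separable. Your direct-sum idea is exactly right; the point you were missing is that you should \emph{use} the lift guaranteed by separability of $\pi'$ and split \emph{that} lift along $\kk \oplus \mm$, rather than trying to reconstruct the tangent space to the $K$-orbit from scratch or to count dimensions of centralizers.
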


\begin{proof}
Since $\pi':G \rightarrow G\cdot  \tuple{h}$ is separable, the
 differential $d_e\pi':\gg \rightarrow T_{\tuple{h}}(G \cdot \tuple{h})$ is surjective. 
Since $T_{\tuple{h}}(K \cdot \tuple{h}) \subseteq T_{ \tuple{h}}(G \cdot \tuple{h})$,
 any $y \in T_{\tuple{h}}(K \cdot \tuple{h})$ has a preimage $z \in \gg$ such that
 $d_e\pi'(z) = y$. 
Let $z=z_1 + z_2$ be a decomposition of $z$ in $\gg = \kk \oplus \mm$. 
Let $\mu:G^n \rightarrow G^n$ be the automorphism of varieties that sends a tuple
 $(g_1,\dots,g_n)$ to $(g_1h_1^{-1},\dots,g_nh_n^{-1})$.
Applying the differential of $\mu$ to $y$, we get 
$d_{\tuple{h}}\mu (y) = d_{\tuple{h}}\mu \circ d_e \pi' (z) = d_e
 (\mu \circ \pi') (z_1) + d_e(\mu \circ \pi')(z_2) \in \gg^n$.
Due to our assumption on the $h_i$, the map $\mu$ sends 
$K \cdot \tuple{h}$ to $K^n$, so that 
$d_{\tuple{h}}\mu (y) \in \kk^n$.
Likewise, $d_e
 (\mu \circ \pi') (z_1)\in \kk^n$.
However, 
$d_e(\mu \circ \pi')(z_2) = ((1 -\Ad h_1)(z_2),\dots,(1-\Ad h_n)(z_2)) \in \mm^n$,
according to the stability of the decomposition $\gg = \kk \oplus \mm$.  
We deduce that $d_e(\mu \circ \pi')(z_2)=0$ 
and hence $d_e(\mu \circ \pi')(z_1)=d_{\tuple{h}}\mu(y)$. 
Since $\mu$ is an automorphism, this implies that $y = d_e\pi'(z_1) = d_e \pi(z_1)$. 
We have thus shown that $d_e\pi$ is surjective, i.e., that $\pi$ is separable.
\end{proof}

Our next result generalizes \cite[Thm.\ 1.4]{BMRT} 
(which is the special case of Corollary \ref{cor:descend}
when $(G,K)$ is a reductive pair). 

\begin{cor} 
\label{cor:descend}
Let $H \le K \le G$ be closed subgroups.
Suppose that there is a decomposition $\gg = \kk \oplus \mm$ as an $H$-module
and that $H$ is separable in $G$. 
Then $H$ is separable in $K$.  
\end{cor}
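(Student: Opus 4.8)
The plan is to deduce Corollary~\ref{cor:descend} from Lemma~\ref{lem:descend} by choosing a suitable generic tuple and translating the separability statements through Lemma~\ref{lem:sep}. First I would pick $n$ large enough and a tuple $\tuple{h} = (h_1,\dots,h_n) \in H^n$ that is generic for $H$ (for some embedding $G \hookrightarrow \GL_m$); such a tuple exists by the discussion preceding Lemma~\ref{lem:sep}. Since $H$ is separable in $G$ by hypothesis, Lemma~\ref{lem:sep} (applied with $K$ replaced by $G$) shows that the orbit map $\pi' : G \to G\cdot\tuple{h}$ is separable. Now the $h_i$ lie in $H \le K$, so in particular $h_i \in N_G(K)$; and the decomposition $\gg = \kk \oplus \mm$ is $H$-module, hence in particular $\Ad_G(h_i)$-stable for each $i$. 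Thus all the hypotheses of Lemma~\ref{lem:descend} are met, and it yields that the orbit map $\pi : K \to K\cdot\tuple{h}$ is separable.

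To finish, I would feed this back into Lemma~\ref{lem:sep}: since $\tuple{h}$ is a generic tuple for $H$ and the orbit map $K \to K\cdot\tuple{h}$ is separable, Lemma~\ref{lem:sep} gives exactly that $H$ is separable in $K$, which is the assertion of the corollary.

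The one genuinely load-bearing point — and the step I would be most careful about — is the compatibility of the genericity notion with the containment $H \le K \le G$: Lemma~\ref{lem:sep} is stated for a generic tuple \emph{for $H$}, and I am applying it twice, once viewing $\tuple{h}$ inside $G$ and once inside $K$. This is fine because genericity of $\tuple{h}$ depends only on $H$ and the chosen embedding of the ambient group, and both applications use the same $\tuple{h}$ and the same embedding $G\hookrightarrow\GL_m$ (restricted to $K$); the identities $C_K(H) = C_K(\tuple{h})$ and $\cc_\kk(H) = \cc_\kk(\tuple{h})$ from \cite[Lem.\ 5.5(i)]{GIT} hold verbatim with $K$ in place of $G$. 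Everything else is a formal chaining of the two preceding lemmas, and no further calculation is needed.
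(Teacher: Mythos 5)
Your proof is correct and follows essentially the same route as the paper: pick a generic tuple $\tuple{h}$ for $H$, use Lemma~\ref{lem:sep} with $K=G$ to convert the hypothesis into separability of the orbit map $G\to G\cdot\tuple{h}$, apply Lemma~\ref{lem:descend} (noting $h_i\in H\le K\le N_G(K)$ and that the $H$-module decomposition is $\Ad(h_i)$-stable) to get separability of $K\to K\cdot\tuple{h}$, and then use Lemma~\ref{lem:sep} again to conclude. The compatibility point you flag is already built into the statement of Lemma~\ref{lem:sep}, which is formulated for an arbitrary intermediate group $K$ with $H\le K\le G$, so no extra care is actually needed there.
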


\begin{proof}
Let $\tuple{h} \in G^n$ be a generic tuple of $H$.
Since $H$ is separable in $G$, the orbit map
$G \rightarrow G\cdot \tuple{h}$ is separable,
thanks to Lemma \ref{lem:sep}. 
It then follows from Lemma \ref{lem:descend} that 
$K \rightarrow K\cdot \tuple{h}$ is separable and thus that 
$H$ is separable in $K$, again by Lemma \ref{lem:sep}. 
\end{proof}

Next we give an immediate consequence of Corollary \ref{cor:descend}
and \cite[Thm.\ 1.2]{BMRT}.

\begin{cor} 
\label{cor:descend2}
Suppose that $p$ is very good for $G$.
Let $H \le K \le G$ be closed subgroups.
Suppose that there is a decomposition $\gg = \kk \oplus \mm$ as an $H$-module.
Then $H$ is separable in~$K$. 
\end{cor}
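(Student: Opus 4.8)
The plan is to combine Corollary \ref{cor:descend} with the main separability result of \cite{BMRT}, which guarantees smoothness of centralizers whenever $p$ is very good. Concretely, suppose $p$ is very good for $G$, let $H \le K \le G$ be closed subgroups, and suppose $\gg = \kk \oplus \mm$ decomposes as an $H$-module. By \cite[Thm.\ 1.2]{BMRT}, every subgroup of $G$ is separable in $G$; in particular $H$ is separable in $G$. Thus all the hypotheses of Corollary \ref{cor:descend} are met: we have the $H$-module decomposition $\gg = \kk \oplus \mm$ and $H$ is separable in $G$. Applying Corollary \ref{cor:descend} directly yields that $H$ is separable in $K$, which is the assertion.

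I expect there to be no real obstacle here: the statement is essentially the composition of two previously established facts, and the work lies entirely in Corollary \ref{cor:descend} (which in turn rests on Lemmas \ref{lem:sep} and \ref{lem:descend}) and in the cited theorem from \cite{BMRT}. The only point worth a sentence of care is verifying that the hypothesis ``$\gg = \kk \oplus \mm$ as an $H$-module'' in Corollary \ref{cor:descend} is exactly what is being assumed here, so that no ambient reductivity of $K$ or reductive-pair condition on $(G,K)$ is needed; this is precisely the generality in which Corollary \ref{cor:descend} was stated. I would therefore present the proof as a two-line deduction.

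\begin{proof}
Since $p$ is very good for $G$, every closed subgroup of $G$ is separable in $G$, by \cite[Thm.\ 1.2]{BMRT}; in particular $H$ is separable in $G$. As we also have a decomposition $\gg = \kk \oplus \mm$ as an $H$-module by hypothesis, Corollary \ref{cor:descend} applies and shows that $H$ is separable in $K$.
\end{proof}
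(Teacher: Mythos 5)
Your proof is correct and matches the paper's own argument exactly: the paper states the corollary as ``an immediate consequence of Corollary \ref{cor:descend} and \cite[Thm.\ 1.2]{BMRT},'' which is precisely the two-line deduction you give. Nothing further is needed.
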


We are now in a position to prove  Theorem \ref{thm:sufficient-cr}.

\begin{proof}[Proof of Theorem \ref{thm:sufficient-cr}]
Thanks to \cite[Lem.\ 2.12(ii)(b)]{BMR}, 
we may assume that $V$ is a faithful $G$-module so that $G \le \GL(V)$.
By assumption, $H$ acts semisimply on 
$\Lie(\GL(V)) \cong V \otimes V^*$, and it is automatically
 separable in $\GL(V)$ (cf.\ \cite[Ex.\ 3.28]{BMR}). 
Since the $H$-submodule $\gg$ must have a complement 
in $\Lie(\GL(V))$, we can use Corollary \ref{cor:descend} (with ``$G=\GL(V)$'' 
and ``$K=G$'') to deduce that $H$ is separable in $G$. Moreover, as an $H$-submodule of $\Lie(\GL(V))$, 
$\gg$ is also semisimple. 
Finally, \cite[Thm.\ 3.46]{BMR} implies that $H$ is $G$-cr.
\end{proof}

We discuss some consequences of Theorem \ref{thm:sufficient-cr}.

\begin{rem}
\label{rem:linred}
It follows readily from Theorem \ref{thm:sufficient-cr} that
a linearly reductive subgroup of $G$ is $G$-cr and separable in $G$,
\cite[Lem.\ 2.6]{BMR} and \cite[Lem.\ 4.1]{rich1}.
In particular, Theorem \ref{thm:sufficient-cr} gives an alternative
proof for the first fact without any cohomology considerations,
cf.~\cite[\S 6]{rich1}.
\end{rem}

\begin{rem}
\label{rem:2}
Assume as in Theorem \ref{thm:sufficient-cr} that 
$H$ is a closed subgroup of $G$ acting semisimply on $V \otimes V^*$ 
for some faithful $G$-module $V$.
Since  $H$ is separable in $\GL(V)$ (cf.\ \cite[Ex.\ 3.28]{BMR}),
and semisimple on $\Lie(\GL(V)) \cong V \otimes V^*$, 
it follows from \cite[Thm.\ 3.46]{BMR} 
that $H$ is $\GL(V)$-cr, i.e., 
$V$ is a semisimple $H$-module 
(cf.\ \cite[(2.2.2), Prop.\ 3.2]{serre0.5}).

Moreover,
by  Theorem \ref{thm:sufficient-cr},
$H$ is $G$-cr and thus $H^\circ$ is reductive and the proof of 
Theorem \ref{thm:sufficient-cr} shows that both 
$(\GL(V),H)$ and $(G,H)$ are reductive pairs.
Note that in general $(\GL(V),G)$ need not be a reductive pair. 
\end{rem}

\begin{exmp}
Let $p = 2$, $G=\SL_2$ and let $T$ be a maximal torus of $G$.  
Let $H = N_G(T)$.  
If $\phi\colon G\ra G'$ is a non-degenerate epimorphism
(i.e., $(\ker \phi)^\circ$ is a torus), 
then $G'= \SL_2$ or $G'= \PGL_2$ and it is easily checked that 
$\phi(H)= N_{G'}(T')$, where $T':= \phi(T)$ is a maximal torus of $G'$.  
Hence $\phi(H)$ is not separable in $G'$.  
It follows from Theorem \ref{thm:sufficient-cr} 
that $H$ does not act semisimply on $V\otimes V^*$
for \emph{any} non-degenerate $G$-module $V$.
\end{exmp}

\begin{rem}
 The converse of Theorem \ref{thm:sufficient-cr} is false.  
For instance, let $p = 2 $, let $H= G= \GL_2$ 
and let $V$ be the natural module for $G$.
Then clearly $H$ is $G$-cr and separable in $G$.
But $V\otimes V^*$ is not $H$-semisimple.  
\end{rem}

\bigskip


{\bf Acknowledgments}:
The authors acknowledge the financial support of
the DFG-priority program SPP 1388 ``Representation Theory''.
Part of the research for this note was carried out while the second and fourth 
authors were visiting the Department of Mathematics at the University of York.
We would like to 
thank the members of the Department of Mathematics at the University of York
for their hospitality.
We are grateful to J-P.\ Serre for helpful comments on the material of this note.


\end{document}